\documentclass[11pt]{article}
\usepackage{amsmath} 
\usepackage{amsthm} 
\usepackage{amssymb}
\usepackage[T1]{fontenc} 
\usepackage{comment} 
\usepackage[english]{babel} 
\pagestyle{plain}
\theoremstyle{plain}
\newtheorem{thm}{Theorem}

\newtheorem*{thm*}{Theorem}
\newtheorem{Lem}{Lemma}
\newtheorem*{Lem*}{Lemma}

\theoremstyle{remark}

\newtheorem*{rmks*}{Remarks}

\numberwithin{equation}{section}
\usepackage{graphicx}

 % < 
 % >

\usepackage{hyperref}

\title{Non-homothetic convex ancient solutions for flows
	by high powers of curvature}
\date{}
\begin{document}
	 	\author{Susanna Risa  \footnote{Dipartimento di Matematica e Applicazioni ``Renato Caccioppoli'', Universit\`a degli Studi di Napoli ``Federico II'', Via Cintia, Monte S.Angelo I-80126, Napoli, Italy. E-mail: susanna.risa@unina.it}
		, Carlo Sinestrari \footnote{Dipartimento di Matematica, Universit\`a di Roma ``Tor Vergata'', Via della Ricerca Scientifica 1, 00133, Roma, Italy. E-mail: sinestra@mat.uniroma2.it}
	}
	\maketitle
	\begin{abstract}	 
	We prove the existence of closed convex ancient solutions to curvature flows which become more and more oval for large negative times. The speed function is a general symmetric function of the principal curvatures, homogeneous of degree greater than one. This generalises previous work on the mean curvature flow and other one-homogeneous curvature flows.	
As an auxiliary result, we prove a new theorem on the convergence to a round point of convex rotationally symmetric hypersurfaces satisfying a suitable constraint on the curvatures.
		\end{abstract}
	\section{Introduction}
	We study ancient solutions for flows of hypersurfaces in Euclidean space, driven by a symmetric function of the principal curvatures. A solution is called ancient if it exists for all negative times in the past; the simplest example is given by a standard sphere shrinking by homotheties. In this article, we study the existence of compact convex ancient solutions which are different from the sphere. 
We construct rotationally symmetric solutions which shrink to a round point at the final time and become more and more eccentric as  $t \to -\infty$. Our result applies to a large class of nonlinear flows: the speed is any symmetric, homogeneous function of the principal curvatures, with degree of homogeneity $\alpha \geq 1$, with no further structural assumptions except the standard monotonicity property which ensures parabolicity. This generalises previous work on the mean curvature flow \cite{W03,WA,HH} and for flows with one-homogeneous speeds satisfying suitable convexity or concavity assumptions \cite{R,LZ}. As a preliminary step in our construction, we prove a result of independent interest on the convergence to a round point of convex rotationally symmetric hypersurfaces such that the axial curvature does not exceed the radial ones.

	Ancient solutions arise as tangent flows of general solutions at points where the curvature becomes unbounded. For this reason, they occur naturally in the study of singularities and their analysis is an essential step in the papers by Hamilton and Perelman on the Ricci flow, for example \cite{H2} and \cite{P} . For extrinsic curvature flows of the kind considered here, ancient solutions have also been studied during the years by many authors, who have constructed examples with various different behaviours and obtained classification results. In addition to the applications to the singularity analysis, ancient solutions have attracted interest in themselves because of their remarkable, and sometimes unexpected, geometric properties, see e.g. \cite{BLT4}. 

The extrinsic flow with the richest collection of results on ancient solutions is the mean curvature flow, particularly in the context of mean convex hypersurfaces. We describe briefly some of the available existence results; we omit the special case of the curve shortening flow in the plane and focus on the higher dimensional setting which is of interest for our paper. Natural examples of ancient solutions are provided by solitons, the solutions which only evolve by a symmetry of the system. The shrinking sphere is the only mean-convex and compact soliton; if one of these two assumptions is removed, further examples arise, such as Angenent's self-shrinking torus \cite{An1}, which is compact, but not mean-convex, or translating, convex and noncompact solutions, like the grim hyperplanes, the bowl and the flying wings \cite{AW,WA,HIMW}.

When passing from solitons to general ancient solutions, one again observes strong rigidity properties in the convex compact case. It has been shown that there are several possible additional conditions, such as uniform curvature pinching, or a growth bound  on the curvature or diameter as $t \to -\infty$, under which no compact convex ancient solution exists other than the shrinking sphere, see e.g. \cite{HH,HS,L1}. On the other hand, if no such additional requirement is imposed, many interesting nontrivial examples have been found. By the results of Wang \cite{WA}, convex ancient solutions of the mean curvature flow either sweep the whole space, or are confined in the slab between two parallel hyperplanes. Following \cite{BLT3}, we call the former ones (except for the round sphere) {\em ovaloids}, and the latter ones {\em pancakes}. The existence of ovaloids was first shown by White \cite{W03}, who sketched an approximation procedure by a sequence of convex solutions of increasing eccentricity. A detailed argument was later provided  by Haslhofer and Hershkovits in \cite{HH}. Ancient pancakes, on the other hand, were constructed by Bourni, Langford and Tinaglia in \cite{BLT} as limits of rotated grim reapers. Both existence results can also be recovered from the more elaborate analysis of Wang in \cite{WA}. As time tends to $-\infty$, suitable blowdowns of the ovaloids converge to the cylinder, while the pancakes converge to the boundary of the slab they live in. By taking limits at the tips, both asymptotically resemble translators: bowl solitons for the ovaloids, grim hyperplanes for the pancake, see \cite{BLT3} for details.

By contrast, much less is known about ancient solutions to hypersurface flows driven by other curvature functions. Results on the existence of ovaloids have been obtained independently in the first author's PhD thesis \cite{R} and, in a more general setting, by Lu and Zhou \cite{LZ}. In both cases, the technique of \cite{W03,HH} is adapted to the case of a speed which has homogeneity one and satisfies suitable assumptions of convexity, concavity, or behaviour on the boundary of the positive cone. To our knowledge, the only related result for other degrees of homogeneity is the one in \cite{B+}, where the authors study the flow of curves in the plane by powers of the curvature, and prove the existence and uniqueness of a nonround compact convex ancient solution for a suitable range of exponents less than one. On the other hand, there are also rigidity results for the spherical solution under hypotheses similar to the ones of the mean curvature flow, see \cite{RS,LL}.

In this paper, we construct ancient ovaloids for a much larger class of flows, by allowing a general homogeneity greater than one and removing all assumptions on the speed except for positivity and parabolicity; on the other hand, we restrict ourselves to the rotationally symmetric case, instead of the more general symmetries considered in \cite{HH,LZ}.
As in the previous works, the ancient flow is obtained as the limit of a sequence of approximate solutions on finite time intervals of increasing length, with initial data given by a long cylinder, smoothly capped at the ends with two half-spheres. We are able to prove that the sequence satisfies uniform bounds on the radii and on the curvature on any finite time interval, which imply compactness and the existence of an ancient limit with the desired properties. It is interesting to observe that the procedure can be carried out even though some important tools from the previous works are no longer available, such as the noncollapsing property for the mean curvature flow and its consequences used in \cite{HH}, or the convexity/concavity assumptions made in \cite{LZ,R} in order to apply Krylov-Safonov's estimates. In fact, it turns out that the special structure of the approximants allows to derive curvature bounds and regularity from simple direct arguments which apply to general speeds, taken from \cite{AMCZ}, with no need of advanced results from parabolic theory. This suggests that the existence of ovaloids is a quite general feature of curvature flows and is not related to particular properties of the speed, except possibly for some growth constraint. On the other hand, it is interesting to remark that there are geometric flows in different settings where no analogue of the ovaloids exists: this has been shown for flows in the sphere in \cite{BIS} and for expanding flows in Euclidean space in \cite{RS2}.

Compared to the previous works, our analysis requires an additional step, which we now describe. The procedure of \cite{W03} makes an essential use of the property that the approximants converge to a round point, that is, they shrink to a point and converge to a round sphere after rescaling. For the mean curvature flow and for the one-homogeneous flows considered by the previous authors, convergence to a round point holds for any convex hypersurface, according to well-known result by Huisken \cite{Hu} and Andrews \cite{A1}. By contrast, in homogeneity greater than one and in general dimension, a result of this kind is not known except for the special case of the powers of the Gauss curvature \cite{AGN, BrendleDaskaChoi}. Furthermore, in \cite{AMCZ} the authors have constructed an example of a convex hypersuface which loses convexity along the evolution if the speed does not satisfy a concavity condition on the boundary of the positive cone. Convergence to a round point holds for general speeds \cite{AMC} if the starting hypersurface satisfies a suitably strong curvature pinching: however, ovaloids do not satisfy any uniform pinching as $t \to -\infty$, thus no such property can be expected on the approximants. Even in the rotationally symmetric case, the few available results in homogeneity greater than one \cite{MMM,LWW} require some curvature pinching. However, we can exploit a further feature of our approximants, namely that they can be constructed in such a way that the axial curvature is not larger than the other ones at every point. It turns out that this property is invariant under the flow and provides the control on the gradient terms in the curvature evolution equations required to prove convergence to a round point by an adaptation of the method of \cite{AMC}. We call a rotationally symmetric hypersurface with this property {\em axially stretched}. We state and prove the convergence of such manifolds to a round point as a separate result, independent from the application to the existence of ovaloids. We point out that the
result holds under such general hypotheses on the speed due to the restrictions on the class of data considered. In fact, the 
counterexample to the invariance of convexity in \cite{AMCZ} is not rotationally symmetric. In addition, Andrews has proved in \cite{A5} that there are rotationally symmetric surfaces for which curvature pinching fails to improve under flows with high homogeneity; such surfaces, however, are not axially stretched.

Our paper is organised as follows. In Section 2 we state the precise assumptions on our speed function and recall some preliminary results on our flow and on rotationally symmetric hypersurfaces. In Section 3 we prove the result on the convergence to a round point of a capsule/axially stretched hypersurface. Finally, in Section 4, we introduce our approximants and prove the uniform second order bounds which ensure compactness and convergence of a subsequence to an ancient ovaloid, which is also shown to be asymptotically cylindrical after performing an appropriate blowdown limit.

	  \section{Preliminaries and notation}
	The flow we consider is expressed as
	\begin{equation}\label{Fflow}
		\frac{\partial \varphi}{\partial t}(p,t)=-f(p,t)\nu(p,t),
			\end{equation}                  
	where $\varphi: M^n \times I \to \mathbb{R}^{n+1}$ is a time-dependent immersion of a hypersurface in Euclidean space, with $n \geq 2$, and $f$ is a function of the principal curvatures at $(p,t)$. The interval of times $I$ will be either bounded or unbounded.

We denote the metric associated with the immersion by $g=\left\{g_{ij}\right\}$, the second fundamental form by $h=\left\{h_{ij}\right\}$, and the principal curvatures by $\lambda_1,\dots,\lambda_n$. Then the mean curvature is given by $H=\lambda_1+\dots+\lambda_n$ and the squared norm of $h$ is $|h|^2=\lambda_1^2+\dots+\lambda_n^2$.

Let us denote by $\Gamma_0 \subset \mathbb{R}^n$ the cone which consists of all $n$-ples of the form $(\lambda,\mu,\mu,\dots,\mu)$ with $\mu>0$ and $0 \leq \lambda \leq \mu$, and of their permutations. This cone describes the possible values of the curvatures for the hypersurfaces we will consider in this paper, which will be convex (possibly weakly), rotationally symmetric and with the curvature in the axial direction not greater than the ones in the radial direction.

We assume that the speed $f=f(\lambda_1,\dots,\lambda_n)$ satisfies the following assumptions.
\begin{description}
\item[(F1)] $f$ is a smooth symmetric function defined on an open symmetric cone $\Gamma$ which contains $\Gamma_0$.
\item[(F2)] $f$ satisfies $\frac{\partial f}{\partial \lambda_i} > 0$ for every $i$ on $\Gamma$.
\item[(F3)] $f$ is homogeneous of degree $\alpha\geq1$.
\item[(F4)] $f$ is positive on $\Gamma$.% and is normalized to have $f(1,1,\dots,1)=n^\alpha$.
\end{description}
Without loss of generality, we will also assume that the speed satisfies the normalisation condition $f(1,\dots,1)=n^\alpha$.
We remark that we do not assume any property of convexity or concavity for $f$.

	 With the capital letter $F$ we denote the speed expressed as a function of the Weingarten operator $h^i_j=h^{ik}g_{kj}$ or, equivalently, of the metric and the second fundamental form; we recall that $f$ and $F$ have the same differentiability properties, see for example \cite{Ger}. As in \cite{AMC,Ger} we use the notation $\dot F^{ij}, \ddot F^{ij,rs}$ for the first and second derivatives of $F$ with respect to $h_{ij}$. We recall that $\dot F^{ij}$ is positive definite by assumption (F2).

	We denote by $\mathcal{L}=\dot{F}^{ij}\nabla_i \nabla_j$ the elliptic operator associated to \eqref{Fflow}.

	The evolution equations for the Weingarten operator $h^i_j$ and for a smooth symmetric function $G$ of the principal curvatures are the following (see, for example, \cite{A1}, \cite{AMC}):
	\begin{align}
		\frac{\partial h^i_j}{\partial t}=& \mathcal{L}h^i_j+\ddot{F}^{kl,rs}\nabla^i h_{kl}\nabla_j h_{rs}+\dot{F}^{kl}h_{km}h^m_lh^i_j+(1-\alpha)Fh^{im}h_{mj},\\%fine evo 1, inizio evo 2
		\frac{\partial G}{\partial t}= &\mathcal{L}G +\left[\dot{G}^{ij}\ddot{F}^{kl,rs}-\dot{F}^{ij}\ddot{G}^{kl,rs}\right]\nabla_i h_{kl}\nabla_jh_{rs} \nonumber \\
		&+\dot{F}^{kl}h_{km}h^m_l\dot{G}^{ij}h_{ij}+(1-\alpha)F\dot{G}^{ij}h_{im}h^m_j.\label{omogeq}
	\end{align}
In particular, the mean curvature and the speed satisfy 
	\begin{align}
		\frac{\partial H}{\partial t}&=\mathcal{L}H+\ddot{F}^{ml,rs}\nabla^i h_{ml}\nabla_i h_{rs}+\dot{F}^{ml}h_{ms}h^s_lH+(1-\alpha)F|h|^2\\
		\frac{\partial F}{\partial t}&=\mathcal{L}F+\dot{F}^{ij}h_{im}h^{m}_jF. \label{speedevo}
	\end{align}

	A function that we will often examine is $Z_\sigma=|h|^2 -\left(\frac 1n+\sigma \right) H^2$, for suitable choices of $\sigma \in \left[0,\frac {1}{n(n-1)}\right]$. In this case, the reaction terms which appear in \eqref{omogeq} take the form
%	\begin{eqnarray}
%	\lefteqn{\left[\dot{Z_\sigma}^{ij}\ddot{F}^{kl,rs}-\dot{F}^{ij}\ddot{Z_\sigma}^{kl,rs}\right]\nabla_i h_{kl}\nabla_jh_{rs}} \nonumber\\
%	& = & 2\left(h^{ij}-\left(\frac 1n + s\right)Hg^{ij}\right)\ddot{F}^{ml,rs}\nabla_i h_{ml} \nabla_j h_{rs} \label{gradZ}\\
%		& &-2\dot{F}^{ij}(\nabla_i h_{ml}\nabla_j h^{ml}-\frac 1n \nabla_i H \nabla_j H)+2s\dot{F}^{ij}\nabla_i H \nabla_j H ,
%	              \nonumber
%	              \end{eqnarray}    
%	              while the reaction terms become      
	             \begin{eqnarray}            
	             \lefteqn{\dot{F}^{kl}h_{km}h^m_l\dot{Z_\sigma}^{ij}h_{ij}+(1-\alpha)F\dot{Z_\sigma}^{ij}h_{im}h^m_j} \nonumber \\
	             & = & 2 \dot{F}^{ij}h_{im}h^m_jZ_\sigma+\frac 2n (1-\alpha) F(nC-(1+n\sigma)H|h|^2), \label{reactZ}
	                         \end{eqnarray}   
%		the evolution equation of $Z_s$ from \eqref{omogeq} can be rewritten more explicitly as
%	\begin{align}                                               
%		\partial_t Z_s= \, &\mathcal{L}Z_s+2\left(h^{ij}-\left(\frac 1n + s\right)Hg^{ij}\right)\ddot{F}^{ml,rs}\nabla_i h_{ml} \nabla_j h_{rs}\nonumber\\
%		&-2\dot{F}^{ij}(\nabla_i h_{ml}\nabla_j h^{ml}-\frac 1n \nabla_i H \nabla_j H)\nonumber \\
%		&+2s\dot{F}^{ij}\nabla_i H \nabla_j H +2 \dot{F}^{ij}h_{im}h^m_jZ_s\nonumber\\
%		&+\frac 2n (1-\alpha) F(nC-(1+ns)H|h|^2),
%	\end{align}
where $C=\lambda_1^3+\dots+\lambda_n^3$. To estimate the quantity above it is useful to recall Lemma 2.2 from \cite{AMC}, which states the following: at any 
point where $Z_\sigma=0$ we have
\begin{equation}\label{est-reaction}
nC-(1+n \sigma)H|h|^2 \geq \sigma(1+n\sigma)(1-\sqrt{n(n-1)\sigma})H^3.
\end{equation}

To parametrise a convex and rotationally symmetric hypersurface, we can choose $\psi: I \times\mathbb{S}^{n-1}\to \mathbb{R}^{n+1}$ of the form $\psi(x,\omega)=(x,u(x)\omega)$, where $I \subset  \mathbb{R}$ is a bounded open interval, $\omega$ is a coordinate on $S^{n-1}$ and $u$ is a positive real function on $I$ tending to zero at the endpoints. 
The poles are not covered by this map: for our purposes, it is enough to observe that they are umbilical points, by symmetry. In the above parametrisation,
%the metric and the second fundamental form are diagonal, with
%	\begin{align*}
%		g_{11}&=1+u_x^2\quad\quad\;h_{11}=\frac{-u_{xx}}{\sqrt{1+u_x^2}} \\
%		g_{ii}&=u^2\sigma_{ii}\quad\quad\quad h_{ii}=\frac{u}{\sqrt{1+u_x^2}}\sigma_{ii}\quad\quad i=2,\dots,n	
%	\end{align*}
%	for a diagonal metric $\sigma=\left\{\sigma_{ii}\right\}$ on $\mathbb{S}^{n-1}$. It follows that
	the principal curvatures are given by
	\begin{equation}\label{curvatures}
	\lambda_1=\frac{-u_{xx}}{(1+u_x^2)^{3/2}}, \quad\quad \lambda_i=
	\frac{1}{u \sqrt{1+u_x^2}}, \qquad i=2,\dots,n.
	\end{equation}
For simplicity of notation, we will set $\lambda:=\lambda_1$ and $\mu:=\lambda_2=\dots=\lambda_n$. We call $\lambda$ and $\mu$ the {\em axial} and {\em radial} curvature respectively.

The following result gives a useful expression for the gradient terms which appear in equation \eqref{omogeq} at a stationary point of $G$ on a rotationally symmetric surface. In the statement below, the apex $i$ will denote derivation with respect to the $i$-th principal curvature, where $i=1$ corresponds to the axial direction and $i=2,\dots,n$ to the radial ones.

 \begin{Lem}\label{LemmaLWW}
			Assume $F,G$ are two smooth homogeneous symmetric functions of the principal curvatures of a convex, closed and axially symmetric hypersurface in $\mathbb{R}^{n+1}$; let $F$ be homogeneous of degree $a$ and $G$ homogeneous of degree $b$.
			At any stationary point for $G$ which is non-umbilical and such that $\dot{g}^{1} \neq 0$, there holds:
			\begin{align}
				(&\dot{G}^{ij}\ddot{F}^{lm,rs}-\dot{F}^{ij}\ddot{G}^{ml,rs})\nabla_i h_{ml}\nabla_j h_{rs}=\nonumber \\
				&\left[\dot{g}^{1}\frac{a(a-1)F}{\mu^2}-\dot{f}^{1} \frac{b(b-1)G}{\mu^2}-2 \dot{g}^{1}\frac{aF}{\mu(\lambda - \mu)}+2\dot{f}^1 \frac{bG}{\mu(\lambda - \mu)} \right. \nonumber\\
				&+ \left(\frac{b^2G^2}{\mu^2(\dot{g}^{1})^2}-\frac{2bG\lambda}{\mu^2\dot{g}^{1}}\right)(\dot{g}^{1}\ddot{f}^{11}-\dot{f}^{1}\ddot{g}^{11})
				\nonumber\\
				&
				\left. -2(n-1)\frac{bG}{\mu \dot{g}^{1}}(\dot{g}^{1}\ddot{f}^{12}-\dot{f}^{1}\ddot{g}^{12})\right](\nabla_1 h_{22})^2 \label{evofg}
			\end{align}
		\end{Lem}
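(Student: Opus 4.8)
The plan is a direct computation in an orthonormal frame adapted to the symmetry, with the homogeneity of $F$ and $G$ supplying most of the simplifications through Euler's identities. At the given non-umbilical point choose an orthonormal frame $e_1,\dots,e_n$ diagonalising $h$, with $e_1$ tangent to the profile curve (so $h_{11}=\lambda$) and $e_2,\dots,e_n$ spanning the tangent space to the rotational orbit (so $h_{ij}=\mu\,\delta_{ij}$ for $i,j\geq2$); the frame may in addition be chosen parallel along the meridians and geodesic-normal on the orbit at the point. Using that the meridians are geodesics, that $\lambda$ and $\mu$ are constant along the orbits, and the Codazzi equations, one checks that at this point the only components of $\nabla h$ not vanishing (up to the full symmetry of the tensor) are $\nabla_1 h_{11}=:Q$ and $\nabla_1 h_{ii}=\nabla_i h_{1i}=:P=\nabla_1 h_{22}$, $i=2,\dots,n$. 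Since the point is stationary for $G$, expanding $0=\nabla_1 G=\dot G^{ij}\nabla_1 h_{ij}=\dot g^1 Q+(n-1)\dot g^2 P$ and eliminating $(n-1)\dot g^2$ through Euler's identity $\lambda\dot g^1+(n-1)\mu\dot g^2=bG$ --- this is where the hypothesis $\dot g^1\neq0$ enters --- gives $Q=\bigl(\tfrac{\lambda}{\mu}-\tfrac{bG}{\mu\dot g^1}\bigr)P$, equivalently $Q-\tfrac{\lambda}{\mu}P=-\tfrac{bG}{\mu\dot g^1}P$.

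Next I would expand the left-hand side of \eqref{evofg}. Writing $\dot G^{ij}=\dot g^i\delta^{ij}$ in this frame, using the standard formula
\begin{equation*}
\ddot F^{kl,rs}\xi_{kl}\xi_{rs}=\sum_{i,j}\ddot f^{ij}\xi_{ii}\xi_{jj}+2\sum_{i<j}\frac{\dot f^i-\dot f^j}{\lambda_i-\lambda_j}\,\xi_{ij}^2
\end{equation*}
for the second derivative of a symmetric function of a symmetric matrix at a diagonal point (the divided difference never has to be interpreted as a limit here, since the only off-diagonal entries of $\nabla_k h$ sit in the $(1,k)$ slots, where $\lambda_1\neq\lambda_k$), together with the fact that $\nabla_1 h$ is diagonal while each $\nabla_i h$ with $i\geq2$ carries only the entries $\xi_{1i}=\xi_{i1}=P$, the left-hand side of \eqref{evofg} becomes
\begin{align*}
A_{11}Q^2&+2(n-1)A_{12}QP+\bigl[(n-1)A_{22}+(n-1)(n-2)A_{23}\bigr]P^2\\
&+2(n-1)\,\frac{\dot f^1\dot g^2-\dot g^1\dot f^2}{\lambda-\mu}\,P^2 ,
\end{align*}
where $A_{ij}:=\dot g^1\ddot f^{ij}-\dot f^1\ddot g^{ij}$.

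It then remains to simplify by homogeneity. Euler's identities applied to the first derivatives of $F$ (homogeneous of degree $a-1$) give $\lambda\ddot f^{11}+(n-1)\mu\ddot f^{12}=(a-1)\dot f^1$ and $\lambda\ddot f^{12}+\mu\ddot f^{22}+(n-2)\mu\ddot f^{23}=(a-1)\dot f^2$, with the analogous pair for $G$. The second identity (and its $G$-counterpart) lets me replace $(n-1)A_{22}+(n-1)(n-2)A_{23}$ by $-\tfrac{\lambda(n-1)}{\mu}A_{12}$ plus a remainder linear in $\dot f^2,\dot g^2$; the first identity then rewrites the leftover $\tfrac{\lambda^2}{\mu^2}A_{11}+(n-1)\tfrac{\lambda}{\mu}A_{12}$ as $\tfrac{\lambda(a-b)}{\mu^2}\dot f^1\dot g^1$. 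Substituting $Q=\bigl(\tfrac{\lambda}{\mu}-\tfrac{bG}{\mu\dot g^1}\bigr)P$ into $A_{11}Q^2$ and into $2Q-\tfrac{\lambda}{\mu}P$, the coefficient of $A_{11}$ collapses to $\tfrac{b^2G^2}{\mu^2(\dot g^1)^2}-\tfrac{2bG\lambda}{\mu^2\dot g^1}$ and that of $A_{12}$ to $-2(n-1)\tfrac{bG}{\mu\dot g^1}$. Finally, using $\lambda\dot f^1+(n-1)\mu\dot f^2=aF$ and its $G$-analogue to eliminate $\dot f^2,\dot g^2$ in favour of $F,G,\dot f^1,\dot g^1$, all the remaining polynomial terms --- the remainder just produced, the term $2(n-1)\tfrac{\dot f^1\dot g^2-\dot g^1\dot f^2}{\lambda-\mu}$, and the piece $\tfrac{\lambda(a-b)}{\mu^2}\dot f^1\dot g^1$ --- reorganise into $\dot g^1\tfrac{a(a-1)F}{\mu^2}-\dot f^1\tfrac{b(b-1)G}{\mu^2}-2\dot g^1\tfrac{aF}{\mu(\lambda-\mu)}+2\dot f^1\tfrac{bG}{\mu(\lambda-\mu)}$, all the $\lambda\dot f^1\dot g^1$ contributions cancelling in pairs. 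Recalling $P=\nabla_1 h_{22}$, this is precisely \eqref{evofg}.

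The argument is entirely elementary; the real work is the bookkeeping --- identifying exactly which components of $\nabla h$ survive together with the Codazzi relations among them, carrying the multiplicities $n-1$ and $(n-1)(n-2)$ due to the repeated radial curvatures through every manipulation, and organising the successive applications of Euler's identities so that the spurious $\tfrac{\lambda}{\mu}$- and $\tfrac{\lambda^2}{\mu^2}$-terms disappear. The hypothesis $\dot g^1\neq0$ is used exactly to solve for $Q$ in terms of $P$ at the first step: if it failed one would instead obtain $P=0$ (when $\dot g^2\neq0$) and the identity would be vacuous.
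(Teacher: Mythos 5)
Your proof is correct and follows essentially the same route as the cited source \cite{LWW} (Lemma 3.6), which the paper invokes without reproving: identify the surviving components $\nabla_1 h_{11}$ and $\nabla_1 h_{22}$ of $\nabla h$ on the axially symmetric hypersurface, use stationarity of $G$ together with Euler's identity to express $\nabla_1 h_{11}$ in terms of $\nabla_1 h_{22}$, expand the contraction via the standard formula for the second derivative of a symmetric function at a diagonal point, and then apply Euler's identities for the first derivatives of $f$ and $g$ to collapse the remaining terms. The intermediate expression you obtain before the final Euler substitutions is precisely formula (3.11) of \cite{LWW}, which the paper quotes later as \eqref{evofg1}, so your argument reconstructs the intended proof.
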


The above identity is proved in \cite[Lemma 3.6]{LWW}; previous similar results can be found in \cite{A5} and \cite{MMM}. 

We remark that the statement in \cite{LWW} requires the hypersurface to have strictly positive curvatures. However, it is easy to see that the result also holds in the weakly convex case. In fact, the proof in \cite{LWW} only requires $\mu>0$ and also works if $\lambda=0$. On the other hand, by \eqref{curvatures},  $\mu>0$ everywhere on a convex axially symmetric hypersurface except possibly at the poles. The poles, however, cannot occur in the above statement since they are umbilical points.

\section{Evolution of axially stretched hypersurfaces}

We will focus our analysis on those convex and rotationally symmetric hypersurfaces with the additional property that the axial curvature $\lambda$ is not greater than the radial one $\mu$. For simplicity, we give a name to this class of manifolds. We say that a compact convex rotationally symmetric hypersurface is {\em axially stretched} if it satisfies $0 \leq \lambda \leq \mu$ and $\mu >0$ everywhere. Equivalently, we can say that  the curvatures belong to the cone $\Gamma_0$ defined in the previous section.

In this section we prove the following result.

	\begin{thm}\label{roundpoint}
	Let $M_0$ be a closed convex (possibly weakly) hypersurface. Suppose that $M_0$ is rotationally symmetric and axially stretched. Then $M_t$ remains axially stretched throughout the evolution by the flow \eqref{Fflow} and converges to a round point in finite time. 
	\end{thm}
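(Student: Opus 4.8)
The plan is to prove the three assertions of the theorem in turn: that the class of axially stretched hypersurfaces is preserved, that the flow becomes extinct at a point in finite time, and that the suitably rescaled hypersurfaces converge smoothly to a round sphere.

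First, by (F1)--(F4) the equation \eqref{Fflow} is (weakly) parabolic on hypersurfaces whose principal curvatures lie in $\Gamma_0 \subset \Gamma$, so $M_0$ admits a unique smooth solution $M_t$ on a maximal interval $[0,T)$, and by uniqueness $M_t$ stays rotationally symmetric. To see that $\Gamma_0$ is preserved I would control, via the parabolic maximum principle, the three inequalities $\mu > 0$, $\lambda \ge 0$, $\lambda \le \mu$ that cut it out. The first is essentially free: by \eqref{speedevo} the quantity $\min_{M_t} F$ is non-decreasing (its reaction term $\dot F^{ij} h_{im} h^m_j F$ is non-negative), hence $F > 0$; off the poles $\mu > 0$ is automatic from \eqref{curvatures}, and a pole is umbilical, so $\mu = 0$ there would force $F = f(0,\dots,0) = 0$. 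For $\lambda \ge 0$ (i.e. convexity) I would apply the maximum principle to the symmetric function $e_n := \lambda_1 \cdots \lambda_n = \lambda \mu^{n-1}$, which on $\partial\Gamma_0$ vanishes exactly on the face $\{\lambda = 0\}$: at a spatial minimum with $e_n = 0$ one has $\lambda = 0 < \mu$, so the point is non-umbilical and not a pole, the zero-order reaction terms in \eqref{omogeq} vanish (each carries a factor $e_n$), and by Lemma \ref{LemmaLWW} (applicable since $\dot e_n^{1} = \mu^{n-1} \ne 0$) the gradient terms collapse to $\alpha(\alpha+1)F\,\mu^{n-3}(\nabla_1 h_{22})^2 \ge 0$; hence $\partial_t e_n \ge \mathcal{L} e_n \ge 0$ there, so $\min e_n$ cannot become negative. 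The delicate inequality is $\lambda \le \mu$, and this is where the axially stretched hypothesis genuinely enters: I would run the analogous maximum-principle argument on $\mu - \lambda$, noting that the reaction terms vanish on the umbilical face $\{\lambda = \mu\}$ and that, by the rotational symmetry together with the Codazzi identities and Lemma \ref{LemmaLWW}, the gradient terms again have the favourable sign on $\Gamma_0$; the customary care is needed at umbilical points and at the poles, where $\mu - \lambda$ vanishes identically.

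Second, finite-time extinction: since $M_t$ remains convex and compact, at a point where it touches its circumscribed sphere of radius $R(t)$ all principal curvatures are at least $1/R$, so by (F2), (F3) and the normalisation $f(1,\dots,1) = n^\alpha$ the speed there is at least $n^\alpha R^{-\alpha}$, giving $\frac{d}{dt} R(t) \le -n^\alpha R(t)^{-\alpha}$; thus $R(t)^{\alpha+1}$ decays at least linearly, $T < \infty$, and $M_t$ shrinks to a point, with $\max_{M_t} H \to \infty$ as $t \to T$ (the flow being extendable as long as the curvature stays bounded).

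Third, and this is the main point and the hardest step, I would prove convergence to a round point by adapting the method of \cite{AMC}. For $\sigma \in (0, \tfrac{1}{n(n-1)}]$ set $Z_\sigma = |h|^2 - (\tfrac1n + \sigma)H^2$ and consider the scale-invariant pinching function $w_\sigma := Z_\sigma / H^2$. On $\Gamma_0$ one has $w_\sigma \in [-\sigma, \tfrac{1}{n(n-1)} - \sigma]$, so at a spatial maximum of $w_\sigma$ the curvatures satisfy $Z_{\sigma^*} = 0$ with $\sigma^* := |h|^2/H^2 - \tfrac1n \in [\sigma, \tfrac{1}{n(n-1)}]$; hence \eqref{reactZ}--\eqref{est-reaction} (applied with parameter $\sigma^*$) together with the evolution equation \eqref{omogeq} for $H$ show that the reaction part of $\partial_t w_\sigma$ is non-positive there, using $\alpha \ge 1$. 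The gradient part of $\partial_t w_\sigma$ would be uncontrolled for a general convex hypersurface, but here Lemma \ref{LemmaLWW} reduces it to a multiple of $(\nabla_1 h_{22})^2$ whose coefficient, as a function on $\Gamma_0$, has the right sign precisely because $0 \le \lambda \le \mu$; this is exactly the place where the restriction on the data replaces the uniform pinching assumed throughout \cite{AMC}. Consequently $\sup_{M_t} w_\sigma$ is non-increasing in $t$, and a Stampacchia iteration on the integrals $\int_{M_t} (Z_\sigma - \varepsilon H^2)_+^p$, carried out as in \cite{AMC} via the Michael--Simon Sobolev inequality and the evolution equations above, improves this to $\sup_{M_t} w_\sigma \to \bar w_\sigma \le 0$ as $t \to T$. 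Letting $\sigma \downarrow 0$ gives $\sup_{M_t}\big(|h|^2/H^2 - \tfrac1n\big) \to 0$; combined with the finite-time extinction and the standard bootstrap from curvature pinching to higher-order estimates, this yields smooth convergence of the suitably rescaled hypersurfaces to the unit sphere, i.e. convergence to a round point. The main obstacle is precisely the sign of that gradient-term coefficient on all of $\Gamma_0$ --- in particular in the region where $\lambda \ll \mu$, where no pinching is available --- and the verification that $\dot g^{1} \ne 0$ at the relevant maxima so that Lemma \ref{LemmaLWW} applies; the examples of \cite{AMCZ} (general hypersurfaces) and \cite{A5} (rotationally symmetric but not axially stretched surfaces) show that some such restriction on the data cannot be dispensed with.
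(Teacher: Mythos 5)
Your plan diverges from the paper's in two places, and in both places there is a genuine gap, not just a cosmetic difference.

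For the preservation of $\lambda\leq\mu$, you propose to apply the maximum principle directly to $\mu-\lambda$ and to use Lemma~\ref{LemmaLWW} to control the gradient terms. But at a first time and point where $\mu-\lambda$ touches zero, the point is by definition umbilical, which is exactly the case excluded in Lemma~\ref{LemmaLWW} (it requires $\lambda\neq\mu$, and indeed $\dot g^{1}$ for such a $G$ vanishes there); the quantity $\mu-\lambda$ is also not a symmetric function of the principal curvatures, so the evolution equation \eqref{omogeq} does not apply to it as stated. You flag that ``care is needed at umbilical points'' but do not say what the care is, and this is precisely the point of the argument. The paper sidesteps it: one works with the symmetric function $Z_\sigma$ for a small $\sigma>0$ on the set $A(t)=\{\lambda>\mu\}$, where the curvatures are automatically strongly pinched, so that the gradient terms can be absorbed by the estimate of Theorem~5.1 of \cite{AMC} rather than by Lemma~\ref{LemmaLWW}, and umbilical points never enter. (Your alternative proof of $\lambda\geq 0$ via $e_n=\lambda\mu^{n-1}$, by contrast, does seem to go through: at a zero minimum of $e_n$ one has $\lambda=0<\mu$, so the point is non-umbilical and non-polar, $\dot g^{1}=\mu^{n-1}\neq 0$, and the reaction terms carry a factor of $e_n$; this is a legitimate alternative to the paper's use of $Z_{\sigma}$ with $\sigma=\tfrac{1}{n(n-1)}$ together with the identity \eqref{pinchconvex}.)

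The larger gap is the step from preserved pinching to improving pinching, i.e.\ to convergence to a round point. You propose a Stampacchia iteration ``as in \cite{AMC} via the Michael--Simon Sobolev inequality'', but this is not what \cite{AMC} does, and it is not available here. The Stampacchia/iteration machinery for curvature pinching relies on integral identities (integration by parts against $\dot F^{ij}$, Simons'-type divergence structure, control of $\nabla H$ by $\nabla h$) that require concavity or convexity of the speed; one of the explicit points of this paper is that no such structural assumption on $f$ is made, which is also why Krylov--Safonov is unavailable. Instead, the paper proves a pointwise improved-pinching estimate by running the maximum principle on the inhomogeneous quantity $Z_l=|h|^2-\tfrac1n H^2-\sigma_0\mathcal M_H^l H^{2-l}$, where the gradient terms must be estimated by adapting formula (3.11) of \cite{LWW} rather than Lemma~\ref{LemmaLWW} directly (because $Z_l$ is not homogeneous), and a careful bookkeeping shows that the bad positive contributions are dominated by the negative ones for small $l>0$. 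This lemma is the technical heart of Section~3, and your proposal omits it: monotonicity of $\sup_{M_t}(|h|^2/H^2)$, even if you could prove it, is not by itself enough to conclude that the ratio tends to $1/n$ at the singular time. Once that lemma is in hand, the rest is the geometric argument of Sections~11--12 of \cite{AMC} (comparability of inner and outer radius, uniform blow-up of the speed), as you also indicate, but without the pointwise improved-pinching lemma the endgame does not start.
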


Throughout the section, we will assume $\alpha > 1$; the result in the case $\alpha = 1$ has been proved for a general axially symmetric hypersurface in \cite{MMM}. 

 By assumption, the curvatures of $M_0$ lie in a compact subset of the cone $\Gamma$ where the speed $F$ is defined.
Therefore, short time existence of a solution of equation \eqref{Fflow} is granted by standard parabolic theory. On the other hand, a general result on the long time existence is not available, unless some additional convexity or concavity condition on the speed is required. We cannot use the argument of \cite{AMC} either, since it relies on the strong curvature pinching which is assumed in that paper. However, in our case this issue is greatly simplified by the assumption of rotational symmetry. In this setting, in fact, there are derivative estimates, see e.g. Section 4 in \cite{McMMV} and the references therein, yielding regularity of the flow as long as the curvature is bounded. It follows that the solution of \eqref{Fflow} exists up to a finite maximal time at which either the norm $|h|^2$ blows up, or else the curvatures reach the boundary of the cone $\Gamma$.

As a first step, we show that the property of being axially stretched is preserved along the evolution.
\begin{Lem}\label{poles}
Let $M_t$ be a closed, convex and rotationally symmetric solution of \eqref{Fflow} for $t \in [0,T)$. If $M_0$ is axially stretched, then it remains so for all $t \in [0,T)$. In addition, the mean curvature of $M_t$ is bounded from below by a positive constant.
\end{Lem}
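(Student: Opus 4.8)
\emph{Strategy.} The plan is to preserve the two inequalities $\lambda\ge 0$ and $\mu-\lambda\ge 0$ which, together with $\mu>0$, cut out the cone $\Gamma_0$, and then to read off $\mu>0$ and the lower bound on $H$ from the evolution \eqref{speedevo} of the speed. First I would record, for a rotationally symmetric solution, the parabolic equations satisfied by the scalar functions $\lambda(x,t),\mu(x,t)$ of \eqref{curvatures}; these are smooth on $M\times[0,T)$ (the poles are interior points of the closed manifold and, being umbilical, produce no singularity of $\lambda,\mu$), and by the derivative estimates quoted before the statement the flow stays smooth with curvatures in $\Gamma$ as long as $|h|^2$ is bounded. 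A direct computation from the evolution equation for the Weingarten operator shows that the reaction system for the principal curvatures, $\dot\lambda_i=\lambda_i\big(\dot F^{kl}h_{km}h^m_l+(1-\alpha)F\lambda_i\big)$, is tangent to each of the walls $\{\lambda=0\}$ and $\{\lambda=\mu\}$ of $\Gamma_0$: indeed $\tfrac{d}{dt}\lambda=\lambda[\cdots]$ and $\tfrac{d}{dt}(\mu-\lambda)=(\mu-\lambda)[\cdots]$, so both reactions vanish on the respective wall. Hence in the maximum principle the only dangerous terms are the gradient terms.

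\emph{Preservation of $\mu-\lambda\ge 0$: the main obstacle.} This is the step I expect to be hardest, because $\mu-\lambda$ vanishes at the two poles (and possibly along whole umbilical arcs) at \emph{every} time, so a naive first--touching--time argument is vacuous. The key is that on a rotationally symmetric hypersurface the Codazzi equations force $\nabla h$ to be controlled by $\lambda-\mu$: one has $\nabla_1h_{11}$ proportional to $\partial_x\lambda$ and $\nabla_1h_{22}$ proportional to $\partial_x\mu=\tfrac{u_x}{u}(\lambda-\mu)$, so $\nabla h$ vanishes at every umbilical point. Consequently the gradient term in the evolution of $\mu-\lambda$ is a quadratic form in $\partial_x\lambda$ and $(\lambda-\mu)$; at a spatial minimum of $\mu-\lambda$ with value $0$ one has simultaneously $\lambda-\mu=0$ and $\partial_x(\mu-\lambda)=0$, hence $\partial_x\lambda=\partial_x\mu=0$, so there $\mathcal{L}(\mu-\lambda)\ge 0$, the gradient term vanishes, and the reaction term vanishes by the tangency above. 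By Hamilton's trick the Lipschitz function $\phi(t):=\min_{M_t}(\mu-\lambda)$ then satisfies $\phi'(t)\ge 0$ whenever $\phi(t)=0$, so $\phi(0)\ge 0$ forces $\phi\ge 0$ on $[0,T)$.

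\emph{Preservation of $\lambda\ge 0$ and conclusion.} On the wall $\{\lambda=0\}$ the minimum of $\lambda$ is attained at a \emph{non}-umbilical point (where $\mu>0$ as long as $M_t$ is still axially stretched), so $\nabla h$ need not vanish and the sign of the gradient term must genuinely be controlled. Here I would run the maximum principle on a symmetric function $G$ vanishing exactly on $\{\lambda=0\}$ with $\dot g^{1}\neq 0$ there — for instance the ratio $e_n/e_{n-1}$ of elementary symmetric polynomials, which equals $\lambda\mu/(\mu+(n-1)\lambda)$ on $\Gamma_0$ — and invoke Lemma~\ref{LemmaLWW}: at a stationary point on $\{G=0\}$ all terms of the coefficient containing the factor $G$ drop out, and, using $\alpha\ge 1$ and the homogeneity of $F$, the coefficient of $(\nabla_1h_{22})^2$ collapses to $\alpha(\alpha+1)F/\mu^2>0$; since the reaction term also vanishes on $\{\lambda=0\}$, the maximum principle yields $G\ge 0$, hence $\lambda\ge 0$, on $[0,T)$. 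These two preservation statements are run together as a bootstrap on the interval where $M_t$ is axially stretched, which is then seen to be all of $[0,T)$. Finally, from \eqref{speedevo} one has $\partial_t F\ge \mathcal{L}F$ wherever $F\ge 0$, since $\dot F^{ij}h_{im}h^m_j=\sum_i\dot f^i\lambda_i^2\ge 0$; hence $\min_{M_t}F$ is non-decreasing and $F\ge c_0:=\min_{M_0}F>0$ throughout, using (F4) and compactness of $M_0$. By monotonicity of $f$ in each argument and the normalisation, $F=f(\lambda,\mu,\dots,\mu)\le f(H,\dots,H)=(nH)^\alpha$ wherever $0\le\lambda\le\mu$, whence $H\ge c_0^{1/\alpha}/n>0$; and since $H=\lambda+(n-1)\mu\le n\mu$ there, also $\mu\ge H/n>0$. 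Thus $M_t$ stays axially stretched and $H$ is bounded below by the positive constant $c_0^{1/\alpha}/n$, which is the assertion.
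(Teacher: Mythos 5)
Your reduction of the lemma to the preservation of the two wall conditions $\lambda\ge 0$ and $\mu-\lambda\ge 0$, and the derivation of the $H$ lower bound from $\partial_t F\ge\mathcal{L}F$ plus homogeneity, are both fine; the latter essentially coincides with the paper's argument. The preservation of $\lambda\ge 0$ via $G=e_n/e_{n-1}$ and Lemma~\ref{LemmaLWW} is a genuinely different (and plausible) route from the paper, which instead handles convexity separately in Lemma~\ref{prespinch} via $Z_\sigma$ with $\sigma=\tfrac{1}{n(n-1)}$; your choice has the advantage that $G$ is strictly positive at the poles, so a zero minimum occurs at a genuine interior non-umbilical point and the scalar maximum principle is not degenerate there.

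The step that does not hold up as written is precisely the one you flag as the hardest: the preservation of $\mu-\lambda\ge 0$. You set $\phi(t)=\min_{M_t}(\mu-\lambda)$, observe that at any zero minimum the gradient and reaction terms vanish, and conclude by ``Hamilton's trick'' that $\phi'(t)\ge 0$ whenever $\phi(t)=0$, hence $\phi\ge 0$. This inference is not valid: the scalar inequality ``$\phi'\ge 0$ on $\{\phi=0\}$'' together with $\phi(0)\ge 0$ does \emph{not} force $\phi\ge 0$ (take $\phi(t)=-t^2$). To close a Hamilton-type argument one needs an ODE barrier of the form $\phi'\ge -C\phi$ in a one-sided neighbourhood of $\{\phi=0\}$, and nothing you have written produces such a bound; your gradient and reaction cancellations are derived \emph{only at} $\mu-\lambda=0$. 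The difficulty is structural: since $\mu-\lambda\equiv 0$ at the two poles for every $t$, one has $\phi(t)\equiv 0$ identically as long as the solution is axially stretched, so the condition $\phi(t)=0$ carries no information and cannot see $\mu-\lambda$ dropping below zero at an interior point while the pole values remain pinned at zero. The paper avoids exactly this degeneracy by working not with $\mu-\lambda$ but with the open set $A(t)=\{\lambda-\mu>0\}$ and the function $Z_\sigma=|h|^2-\bigl(\tfrac1n+\sigma\bigr)H^2$ for a fixed small $\sigma>0$: since $|h|^2-\tfrac1n H^2=\tfrac{n-1}{n}(\lambda-\mu)^2$ vanishes on $\partial A(t)$, one has $Z_\sigma=-\sigma H^2<0$ strictly on $\partial A(t)$, so any first interior-parabolic zero maximum of $Z_\sigma$ over $A(t)$ is attained at a genuinely interior, non-umbilical point with strict pinching $|h|^2=(\tfrac1n+\sigma)H^2$; the smallness of $\sigma$ then makes the gradient-term estimate of [AMC, Thm.~5.1] applicable and yields the contradiction. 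In short, the $\sigma$-shift is not a cosmetic perturbation but the device that turns a degenerate boundary-touching problem into a proper interior maximum-principle argument, and your proposal lacks an analogue of it.
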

\begin{proof} We first prove the claim on the lower bound for $H$. We observe that, by equation \eqref{speedevo}, the minimum of $F$ on $M_t$ is nondecreasing in time, and therefore bounded from below by a positive constant. Since $F$ and $H^\alpha$ are both positive functions on the cone $\Gamma_0$ and have the same degree of homogeneity, a standard compactness argument shows that there are constants $m_1,m_2>0$ such that $m_1 H^\alpha \leq F \leq m_2 H^\alpha$ everywhere in $\Gamma_0$. It follows that $H$ is also bounded from below by a positive constant on $M_t$ during the flow, as claimed.

We now show that the property $\lambda \leq \mu$ is preserved during the flow. Let $A(t)=\left\{p\in M_t \,|\, (\lambda-\mu)(p) > 0\right\}$; by assumption, $A(0)=\emptyset$.  We argue by contradiction and suppose that $A(t)$ is nonempty for some positive time. We let 
$t_0=\inf \left\{t \in (0,T)\,|\,A(t)\neq \emptyset \right\}$.  

Observe that $|h|^2-\frac 1n H^2$ is positive on $A(t)$ and zero on its boundary. Hence, if we choose $\sigma>0$ small enough and we consider the function $Z_\sigma=|h|^2-(\frac 1n+\sigma) H^2$, we can find a first time $\tau > t_0$ at which
$Z_\sigma$ restricted on $A(t)$ attains a zero maximum at some point $p$.  We observe that, by continuity, $(q,t)\in A(t)$ for  $(q,t)$ in a space-time neighbourhood of $(p,\tau)$.  We then study the sign of the terms at the right-hand side of the evolution equation  \eqref{omogeq} with $G=Z_\sigma$. Since $\sigma$ is small, we have a strong curvature pinching and we can follow the proof of Theorem 5.1 in \cite{AMC}. It is proved there that the gradient terms at the point $(q,\tau)$ admit an estimate of the form
%\begin{eqnarray*}
%	&&\left[\dot{Z_\sigma}^{ij}\ddot{F}^{kl,rs}-\dot{F}^{ij}\ddot{Z_\sigma}^{kl,rs}\right]\nabla_i h_{kl}\nabla_jh_{rs} \\
%& \leq & 
%2\left(\mu \sqrt{\sigma+(1+n \sigma)}-\frac{4(n-1)(\alpha-\mu \sqrt{\sigma})}{3n}+\sigma(\alpha+\mu \sqrt{\sigma})\frac{2(n+2)}{3} \right)  H^{\alpha-1}|\nabla h|^2.
%\end{eqnarray*}
\begin{eqnarray*}
	&&\left[\dot{Z_\sigma}^{ij}\ddot{F}^{kl,rs}-\dot{F}^{ij}\ddot{Z_\sigma}^{kl,rs}\right]\nabla_i h_{kl}\nabla_jh_{rs} \\
& \leq & 
2\left(\mu \sqrt{\sigma+(1+n \sigma)}-c_1(\alpha-\mu \sqrt{\sigma})+c_2 \sigma(\alpha+\mu \sqrt{\sigma}) \right)  H^{\alpha-1}|\nabla h|^2,
\end{eqnarray*}
for suitable constants $c_1,c_2>0$ depending on $n$ and $\mu>0$ depending on $F$. 
By choosing $\sigma>0$ suitably small, all terms inside the parentheses become small except for the negative one $-c_1 \alpha$, and thus the total contribution is negative. 

The reaction terms also give a non positive contribution, as it follows from
	\eqref{omogeq}, \eqref{reactZ} \eqref{est-reaction}, taking into account that $\alpha>1$, $F>0$ and $\sigma<\frac{1}{n(n-1)}$. We thus obtain the desired contradiction to the maximum principle and conclude that the condition $\lambda \leq \mu$ is preserved.
\end{proof}

We now turn to the proof of the preservation of convexity. We consider again the function $Z_\sigma$, since it is well known that the condition $|h|^2 \leq \frac{1}{n-1}H^2$, which corresponds to $Z_\sigma \leq 0$ with $\sigma= \frac{1}{n(n-1)}$, implies that all curvatures are nonnegative. We can remark that on an axially stretched hypersurface the two properties are actually equivalent. In fact we have
\begin{eqnarray}
|h|^2 - \frac{1}{n-1}H^2 & = & \lambda^2+(n-1)\mu^2-\frac{1}{n-1}(\lambda+(n-1)\mu)^2 \nonumber \\
& = & \lambda\left( \frac{n-2}{n-1}\lambda  - 2 \mu \right), \label{pinchconvex}
\end{eqnarray}
and the term in parentheses is negative since $\lambda \leq \mu$; thus, the expression is nonpositive if and only if  $\lambda \geq 0$. We now use the maximum principle to show that this inequality is preserved and  becomes strict at all positive times. For later purposes, it is convenient to consider the invariance of the more general inequality $|h|^2 \leq \left(\frac 1n+\sigma \right) H^2$, with $0<\sigma \leq \frac{1}{n(n-1)}$.

%To estimate the gradient terms, we observe that we can apply Lemma 3.6 in \cite{LWW}, which gives its expression at a stationary point of $G$. We rewrite it here for convenience (the apex $i$ will denote derivation with respect to the $i$-th principal curvature):

\begin{Lem}\label{prespinch}
	Let $M_t$ be a solution of \eqref{Fflow} which is rotationally symmetric and axially stretched. Suppose that $|h|^2 \leq  \left(\frac 1n+\sigma \right) H^2$ at $t=0$ for some $0<\sigma \leq \frac{1}{n(n-1)}$. Then the same inequality holds and becomes strict for all $t>0$.
\end{Lem}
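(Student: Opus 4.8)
The plan is to follow the maximum principle applied to $Z_\sigma=|h|^2-\left(\frac1n+\sigma\right)H^2$, a symmetric function of the principal curvatures homogeneous of degree two, through its evolution equation \eqref{omogeq} with $G=Z_\sigma$. Before the estimates I would record some elementary remarks. At any umbilical point, in particular at the poles, all curvatures equal $\mu>0$, so that $Z_\sigma=-n^2\sigma\mu^2<0$; consequently, if $\max_{M_t}Z_\sigma\ge0$ the maximum is attained at a non-umbilical point, which lies in the image of the parametrisation $\psi(x,\omega)=(x,u(x)\omega)$ and satisfies $0\le\lambda<\mu$, $\mu>0$. Moreover, writing $\dot{g}^{1}=2\lambda-2\left(\frac1n+\sigma\right)H$ for the derivative of $Z_\sigma$ in the axial direction and using $H=\lambda+(n-1)\mu$ together with $\lambda\le\mu$ and $0<\sigma\le\frac{1}{n(n-1)}$, one computes $\dot{g}^{1}<0$ (in fact $\dot{g}^{1}\le-2n\sigma\mu$); in particular the assumption $\dot{g}^{1}\neq0$ of Lemma \ref{LemmaLWW} holds at every non-umbilical point of $M_t$. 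Finally, by Lemma \ref{poles} the mean curvature stays bounded below by a positive constant, and hence so does $\mu$, since $H\le n\mu$ on an axially stretched hypersurface.

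Next I would analyse the three terms on the right-hand side of \eqref{omogeq} at a point $p$ where $Z_\sigma$ attains a nonnegative spatial maximum. The term $\mathcal{L}Z_\sigma=\dot{F}^{ij}\nabla_i\nabla_jZ_\sigma$ is nonpositive because $\dot F^{ij}$ is positive definite and $\nabla^2Z_\sigma\le0$ at a maximum. For the reaction terms I would use \eqref{reactZ}: the summand $2\dot{F}^{ij}h_{im}h^m_jZ_\sigma$ is $Z_\sigma$ times a nonnegative bounded factor, while \eqref{est-reaction} together with $\alpha>1$, $F>0$, $\sigma\le\frac{1}{n(n-1)}$ and $H>0$ gives $\frac2n(1-\alpha)F\bigl(nC-(1+n\sigma)H|h|^2\bigr)\le0$ at any point where $Z_\sigma=0$; a routine continuity argument upgrades this to $\frac2n(1-\alpha)F\bigl(nC-(1+n\sigma)H|h|^2\bigr)\le C Z_\sigma$ whenever $0\le Z_\sigma\le\varepsilon_0$, for suitable $\varepsilon_0>0$ and $C$ depending only on curvature bounds on a fixed time interval. \textbf{The crucial point, and the main obstacle, is the gradient term} $\bigl[\dot{Z_\sigma}^{ij}\ddot{F}^{kl,rs}-\dot{F}^{ij}\ddot{Z_\sigma}^{kl,rs}\bigr]\nabla_ih_{kl}\nabla_jh_{rs}$, which for a general speed carries no sign a priori. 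Here I would invoke Lemma \ref{LemmaLWW} with $a=\alpha$, $b=2$, $f=F$ and $g=Z_\sigma$; it applies since $p$ is non-umbilical and $\dot g^1\neq0$. The key observation is that in the expression \eqref{evofg} every term except the first and the third carries a bare factor of $G=Z_\sigma$ — in particular all the terms involving the second derivatives $\ddot F$ and $\ddot G$ do. Hence at a point where $Z_\sigma=0$ the bracket collapses to $\dot{g}^{1}\frac{\alpha(\alpha-1)F}{\mu^2}-2\dot{g}^{1}\frac{\alpha F}{\mu(\lambda-\mu)}=\frac{\alpha F\dot{g}^{1}}{\mu}\Bigl(\frac{\alpha-1}{\mu}+\frac{2}{\mu-\lambda}\Bigr)$, which is negative because $\dot g^1<0$ while $\alpha\ge1$, $F>0$, $\mu>0$ and $\mu-\lambda>0$; since the remaining terms in \eqref{evofg} are $O(Z_\sigma)$, the whole gradient term stays nonpositive for $0\le Z_\sigma\le\varepsilon_0$ as well. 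This is precisely where the axially stretched hypothesis is used, and the reason why no convexity or concavity assumption on $F$ is needed: the $\ddot F$-terms, which would otherwise require such an assumption, are multiplied by $Z_\sigma$ and vanish where they matter.

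Putting the three estimates together, at any spatial maximum $p$ of $Z_\sigma$ with $0\le Z_\sigma(p,t)\le\varepsilon_0$ one obtains $\partial_tZ_\sigma(p,t)\le C Z_\sigma(p,t)$; combined with $Z_\sigma\le0$ at $t=0$ and a standard maximum-principle argument (Hamilton's trick for $t\mapsto\max_{M_t}Z_\sigma$ and a Gr\"onwall estimate), this yields $Z_\sigma\le0$ throughout the evolution. For the strict inequality at $t>0$ I would argue as follows: when $\sigma<\frac{1}{n(n-1)}$ the reaction term is \emph{strictly} negative at every point where $Z_\sigma=0$, so at such a point $\partial_tZ_\sigma<0$, which forces $Z_\sigma$ to be negative instantaneously after any time at which it vanishes; when $\sigma=\frac{1}{n(n-1)}$ one uses the identity \eqref{pinchconvex}, whereby $Z_\sigma\le0$ is equivalent to $\lambda\ge0$, and applies the strong maximum principle to the scalar parabolic equation satisfied by the axial curvature near its (relatively compact, non-umbilical) zero set, which would otherwise force $\lambda\equiv0$, i.e.\ a hyperplane. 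Either way $Z_\sigma<0$ for all $t>0$, completing the proof. I expect the verification that the gradient term in \eqref{evofg} is nonpositive — that is, the bookkeeping of signs in Lemma \ref{LemmaLWW} under the axially stretched condition — to be the heart of the matter; the rest is by now standard.
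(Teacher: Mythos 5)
Your proof follows essentially the same route as the paper: apply Lemma \ref{LemmaLWW} with $G=Z_\sigma$ at a putative zero maximum, observe that every $\ddot F$- and $\ddot G$-term in \eqref{evofg} carries a factor of $G$ and hence vanishes there, so that the bracket collapses to $\frac{\alpha F\dot g^1}{\mu}\bigl(\frac{\alpha-1}{\mu}+\frac{2}{\mu-\lambda}\bigr)<0$, and combine this with the reaction estimate from \eqref{reactZ} and \eqref{est-reaction}; this is exactly the paper's argument. One small slip to correct: in the strictness step with $\sigma=\frac{1}{n(n-1)}$, the identity \eqref{pinchconvex} forces $\lambda\equiv0$ with $\mu>0$, which makes $M_t$ a (noncompact) \emph{cylinder}, not a hyperplane; also the paper treats both values of $\sigma$ uniformly by applying the strong maximum principle to $Z_\sigma$ itself and then using \eqref{est-reaction} to rule out $\sigma<\frac{1}{n(n-1)}$, which avoids the somewhat delicate localisation you invoke for the scalar equation satisfied by $\lambda$.
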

	\begin{proof} We consider again the function $Z_\sigma=|h|^2 - \left(\frac 1n+\sigma \right) H^2$ and analyse the sign of the terms at the right-hand side of \eqref{omogeq} at a point where $G=Z_\sigma$ attains a zero maximum point for the first time. We first observe that, since $|h|^2 = \frac{1}{n}H^2$ at an umbilical point, and $H>0$ everywhere on our hypersurfaces, we have $0 \leq \lambda < \mu$ at our point; in particular, the point is different from the poles. 

As in the previous Lemma, we can use \eqref{reactZ} and \eqref{est-reaction} to conclude that the reaction terms in equation \eqref{omogeq} are nonpositive. The gradient terms, on the other hand, cannot be estimated by the argument of \cite{AMC} because $\sigma$ cannot be chosen arbitrarily small this time. We instead apply Lemma \ref{LemmaLWW} with $F$ equal to our speed and $G=Z_\sigma$, so that $a=\alpha$ and $b=2$. The condition $\lambda \neq \mu$ is satisfied at our point, as we have observed before. In addition, we have
		$$
		\dot{g}^1=\frac{\partial}{\partial \lambda}\left(|h|^2-\frac{H^2}{n-1}\right)=2\left(\lambda-\frac{H}{n-1} \right)<0,$$
		since $\lambda < \mu$. Thus, the hypothesis $\dot{g}^{1} \neq 0$ also holds and we can apply the Lemma. Keeping into account that $G=0$ at $p$, many terms in \eqref{evofg} vanish and we are left with
						\begin{eqnarray*}
						&& [\dot{G}^{ij}\ddot{F}^{lm,rs}-\dot{F}^{ij}\ddot{G}^{ml,rs} ] \nabla_i h_{ml}\nabla_j h_{rs} \\
& = &			\left[2\left( \lambda-\frac{H}{n-1} \right) \frac{\alpha(\alpha-1)F}{\mu^2}-4 \left( \lambda-\frac{H}{n-1} \right)\frac{\alpha F}{\mu(\lambda-\mu)} \right](\nabla_1 h_{22})^2,
		\end{eqnarray*}
		and the quantity in brackets is negative if $\lambda<\mu$.
		
By the maximum principle, we conclude that the inequality $Z_\sigma \leq 0$ is preserved. By the strong maximum principle, the inequality becomes strict for all positive times unless we have $Z_\sigma$ vanishes identically for all times on our solution. Then the right-hand side of \eqref{omogeq} must also vanish, and both the reaction and the gradient terms are identically zero. In particular, \eqref{est-reaction} shows that the only value of $\sigma \in \left(0, \frac{1}{n(n-1)}\right]$ for which this can occur is $\sigma=\frac{1}{n(n-1)}$. But then $Z_\sigma \equiv 0$ implies that $\lambda$ is identically zero by \eqref{pinchconvex} and that $M_t$ is a cylinder, which is a contradiction since $M_t$ is compact. We conclude that $Z_\sigma$ is strictly positive on $M_t$ for $t>0$. 
		\end{proof}	

The previous results show that our solution remains convex and axially stretched until it exists. We will now show that pinching actually improves when the curvature becomes large. Our next statement is the same as in Theorem 11.1 in \cite{AMC}, although we need an independent proof since our assumptions are different. The pinching assumption on the initial value in the next lemma is stronger than what we have assumed until now: however, by the previous lemma, we can assume that it is satisfied after possibly replacing $M_0$ with $M_{t_0}$ for any small $t_0>0$.

\begin{Lem}%[Lemma]
	Let $M_0$ be a hypersurface such that $|h|^2-\frac 1n H^2<\sigma_0 H^2$, with $\sigma_0 < \frac{1}{n(n-1)}$. Let $\mathcal{M}_H=\max_{M_0} H$. Then there exists $l \in (0,1)$ such that $|h|^2-\frac 1n H^2\leq \min\left\{\sigma_0 H^2, \sigma_0\mathcal{M}_H^l H^{2-l}\right\}$ along the flow.
\end{Lem}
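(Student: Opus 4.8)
The plan is to run a single scalar maximum–principle argument for the homogeneous quotient
$$W_l := \frac{|h|^2-\tfrac1n H^2}{H^{2-l}},$$
which is a smooth symmetric function of the principal curvatures, homogeneous of degree $l$, well defined along the flow because $H>0$ by Lemma~\ref{poles}. At $t=0$ the hypothesis gives $W_l<\sigma_0H^l\le\sigma_0\mathcal{M}_H^l$, and Lemma~\ref{prespinch} applied with $\sigma=\sigma_0$ shows that $|h|^2-\tfrac1n H^2\le\sigma_0H^2$ is preserved; hence it suffices to find $l\in(0,1)$ for which $W_l\le\sigma_0\mathcal{M}_H^l$ is preserved as well, since together the two bounds give exactly the claimed minimum.

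To prove preservation I would argue by contradiction. If the bound fails, let $t_1>0$ be the first time at which $\max_{M_t}W_l=\sigma_0\mathcal{M}_H^l$, attained at a point $p\in M_{t_1}$. There $|h|^2-\tfrac1n H^2>0$, so $p$ is non-umbilical, hence not a pole, and \eqref{curvatures} gives $0\le\lambda<\mu$; moreover $\nabla W_l=0$ and $\mathcal{L}W_l\le0$ (spatial maximum), while $\partial_t W_l\ge0$. Feeding back the strict inequality $|h|^2-\tfrac1n H^2<\sigma_0H^2$ of Lemma~\ref{prespinch} and using $\sigma_0<\tfrac1{n(n-1)}$, one obtains $H>\mathcal{M}_H$ at $p$ together with a quantitative bound $\lambda\ge r_0\mu$ with $r_0=r_0(n,\sigma_0)>0$; this keeps the maximum point away from the umbilical locus and makes the estimates below uniform. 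I then compute the right–hand side of \eqref{omogeq} with $G=W_l$, $a=\alpha$, $b=l$, splitting it into reaction terms and gradient terms.

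For the reaction terms, Euler's identity gives $\dot W_l^{ij}h_{ij}=lW_l$, and a direct computation gives $\dot W_l^{ij}h_{im}h^m_j=H^{l-2}\big(\tfrac2n(nC-H|h|^2)-(2-l)H^{-1}(|h|^2-\tfrac1n H^2)|h|^2\big)$. Specialising to an axially stretched configuration, where $|h|^2-\tfrac1n H^2=\tfrac{n-1}{n}(\lambda-\mu)^2$ and $nC-H|h|^2=(n-1)(\lambda+\mu)(\lambda-\mu)^2$, the reaction contribution in \eqref{omogeq} collapses to
$$\frac{n-1}{n}\,H^{l-2}(\lambda-\mu)^2\left[l\,\dot F^{ij}h_{im}h^m_j-(\alpha-1)F\,\frac{l\lambda^2+2n\lambda\mu+l(n-1)\mu^2}{H}\right].$$
Since $\alpha>1$, since $\dot F^{ij}h_{im}h^m_j\le\alpha\mu F$ (from $\sum_i\dot f^i\lambda_i=\alpha F$ and $\lambda_{\max}=\mu$), and since $\lambda\ge r_0\mu$, the bracket is strictly negative as soon as $l<2(\alpha-1)r_0/\alpha$. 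This strictly negative reaction is the mechanism by which pinching improves, and it is exactly here that $\alpha>1$ enters; for $\alpha=1$ this term vanishes, which is why that case is treated separately in \cite{MMM}.

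The delicate part is the gradient term $(\dot W_l^{ij}\ddot F^{lm,rs}-\dot F^{ij}\ddot W_l^{lm,rs})\nabla_ih_{lm}\nabla_jh_{rs}$, which for a general speed has no definite sign since no convexity or concavity of $F$ is assumed. I would control it with Lemma~\ref{LemmaLWW} applied with $G=W_l$: the hypotheses hold at $(p,t_1)$ because the point is non-umbilical, $\lambda\neq\mu$, and $\dot g^1=\partial_\lambda W_l=H^{l-3}\big(\tfrac{2(n-1)}{n}(\lambda-\mu)H-(2-l)(|h|^2-\tfrac1n H^2)\big)<0$ since $\lambda<\mu$. Identity \eqref{evofg} then expresses this term as a multiple of $(\nabla_1h_{22})^2$ whose coefficient contains the $l$-independent, strictly negative term $\dot g^1\,\tfrac{\alpha(\alpha-1)F}{\mu^2}$ (and a further negative term $-2\dot g^1\,\tfrac{\alpha F}{\mu(\lambda-\mu)}$), while every remaining term carries a factor of $l$; using $|\dot g^1|\ge(2-l)\sigma_0\mathcal{M}_H^l/H$, the lower bound $F\ge m_1H^\alpha$ from the proof of Lemma~\ref{poles}, and fixed compactness bounds for $\dot f^i$, $\ddot f^{ij}$ on the relevant slice of $\Gamma_0$, those factor-$l$ terms are dominated once $l$ is small enough, so the whole coefficient is $\le0$. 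With both the gradient and the reaction contributions $\le0$ and the reaction one strictly negative, \eqref{omogeq} gives $\partial_t W_l<0$ at $(p,t_1)$, contradicting $\partial_t W_l\ge0$; this proves the claim. I expect the main obstacle to be precisely this last estimate: one must pick $l\in(0,1)$ small enough to absorb the indefinite $\ddot F$-terms produced by Lemma~\ref{LemmaLWW}, yet keep $l$ positive and not so small that one loses track of the favourable reaction term, which is itself only of order $l$; making a single choice of $l$ (depending on $n$, $\alpha$, $\sigma_0$, $\mathcal{M}_H$, a positive lower bound for $H$, and structural constants of $F$) do both jobs is where the work lies.
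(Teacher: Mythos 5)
Your approach is correct, and it is a genuinely different route from the one taken in the paper. The paper runs the maximum principle on the \emph{difference} $Z_l=|h|^2-\tfrac1n H^2-\sigma_0\mathcal{M}_H^lH^{2-l}$, which is not homogeneous; this forces the authors to step back from Lemma~\ref{LemmaLWW} to the identity (3.11) of [LWW] (valid without homogeneity), recompute the Euler-type relations \eqref{eqhom1}--\eqref{eqhom2} with extra lower-order terms, and re-derive by hand the analogue \eqref{gradterms} of \eqref{evofg}. You instead use the \emph{quotient} $W_l=(|h|^2-\tfrac1n H^2)/H^{2-l}$, which is a smooth symmetric function homogeneous of degree $l$, so Lemma~\ref{LemmaLWW} applies verbatim with $a=\alpha$, $b=l$; this sidesteps the re-derivation entirely. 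The two objects encode the same inequality at a zero maximum, your verification that $\dot g^1<0$ matches the paper's, and in both proofs the terms of the gradient identity that are independent of $l$ (the ones carrying $\alpha(\alpha-1)F/\mu^2$ and $\alpha F/(\mu(\lambda-\mu))$) are strictly negative while every other term carries a factor of $l$, so a single small choice of $l$ makes the whole coefficient nonpositive. On the reaction side you compute explicitly in the axially symmetric frame and identify the key negative term $-(\alpha-1)F\cdot 2n\lambda\mu/H$, using the strict pinching $\sigma_0<\tfrac1{n(n-1)}$ from Lemma~\ref{prespinch} to obtain $\lambda\ge r_0\mu>0$; the paper outsources the same step to [AMC, Thm.~11.1], which rests on \eqref{est-reaction} and likewise uses the strict pinching. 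Both arguments ultimately succeed because $\alpha>1$ produces a favourable reaction term. Two small remarks. First, the tension you describe at the end (choosing $l$ small enough for the gradient estimate yet not so small that the reaction term disappears) is a false alarm: the gradient contribution must be shown $\le0$ on its own, and the reaction contribution must be $<0$, and \emph{both} are one-sided constraints of the form ``$l$ small enough,'' so there is no conflict. Second, by homogeneity the admissible $l$ can be taken to depend only on $n$, $\alpha$, $\sigma_0$ and structural constants of $F$ on the compact slice $\{r_0\le\lambda/\mu\le1\}$ of $\Gamma_0$; the dependences on $\mathcal{M}_H$ and on a lower bound for $H$ you list are not actually needed (though including them is harmless, since the statement only asserts existence of some $l$).
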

\begin{proof}
We know from Lemma \ref{prespinch} that the condition $|h|^2\leq \sigma_0 H^2$ is preserved; therefore, all we need to show is that the function
$$
Z_l= |h|^2-\frac 1n H^2 - \sigma_0 \mathcal{M}_H^l H^{2-l}
$$
remains negative at least at the points where $H\geq \mathcal{M}_H$.
% along the flow if $l>0$ is chosen enough small.
%	At $t=0$ the conclusion holds by assumption and we proved in \ref{prespinch} that $|h|^2\leq \sigma_0 H^2$ is preserved; thus, we only need to show that $Z_l= |h|^2-\frac 1n H^2 - \sigma_0 \mathcal{M}_H^l H^{2-l}$ remains negative at points $p$ where $H\geq \mathcal{M}_H$.
%	The evolution equation for $Z_l$, computed as in \eqref{omogeq} for each summand is (we denote $\sigma=\sigma_0\mathcal{M}_H^l H^{-l}(p)$ for brevity):
%	\begin{align*}
%		\partial_t Z_l&=\mathcal{L}Z_l+\left[\dot{Z_l}^{ij}\ddot{F}^{kl,rs}-\dot{F}^{ij}\ddot{Z_l}^{kl,rs}\right]\nabla_i h_{kl}\nabla_jh_{rs}+2 \dot{F}^{ij}h_{im}h^m_jZ_l\\
%		&+l\sigma\dot{F}^{ij}h_{im}h_j^mH^2-\frac 2n(\alpha-1) F(nC-(1+n\sigma)H|h|^2+\frac {nl\sigma}{2}H|h|^2)
%	\end{align*}
To show this, we consider the evolution equation \eqref{omogeq} with $G=Z_l$ and we estimate the right hand side at a zero maximum point for $Z_l$. We omit the analysis of the reaction terms, since it can be done exactly in the same way as in Theorem 11.1 of \cite{AMC}, and we focus instead on the estimation of the gradient terms.

This time we cannot apply directly Lemma \ref{LemmaLWW}, because $G$ is the sum of two terms with different degrees of homogeneity. However, we can adapt the strategy of proof from \cite{LWW} to our situation and obtain again a useful expression for the gradient terms. Our starting point will be the following identity, which is a consequence of the rotational symmetry and is independent of the homogeneity of $F,G$: at any stationary point of $G$ with $\dot g^1 \neq 0$ and $\lambda \neq \mu$ we have, see formula (3.11) in \cite{LWW},

\begin{align}
				(&\dot{G}^{ij}\ddot{F}^{lm,rs}-\dot{F}^{ij}\ddot{G}^{ml,rs})\nabla_i h_{ml}\nabla_j h_{rs}=\nonumber \\
				&\left\{(n-1)^2  \left(\frac{\dot g^2}{\dot g^1} \right)^2
				(\dot{g}^1\ddot{f}^{11}-\dot{f}^1\ddot{g}^{11})
				-2(n-1)^2 \frac{\dot g^2}{\dot g^1} 
				(\dot{g}^1\ddot{f}^{12}-\dot{f}^1\ddot{g}^{12}) \right. \nonumber\\
				&+(n-1) (\dot{g}^1\ddot{f}^{22}-\dot{f}^1\ddot{g}^{22})
				+(n-1) (n-2) (\dot{g}^1\ddot{f}^{23}-\dot{f}^1\ddot{g}^{23})  \nonumber\\
				& \left. +2(n-1)\frac{\dot{g}^2\dot{f}^1-\dot{f}^2\dot{g}^1}{\lambda-\mu}\right\} (\nabla_1h_{22})^2.
 \label{evofg1}
			\end{align}

The property $\lambda \neq \mu$ certainly holds at a point where $G=0$, since $|h|^2-\frac 1n H^2$ at an umbilical point, and $H>0$ everywhere on our hypersurface. We will check later in the proof that the requirement $\dot g^1 \neq 0$ is also satisfied.

Since $F$ is homogeneous of degree $\alpha$, we deduce from Euler's theorem and the rotational symmetry that the following identities hold:
	\begin{align}
		\lambda \dot{f}^1+(n-1)\mu \dot{f}^2 &=\alpha F \label{eqhom1f}\\
		\lambda^2 \ddot{f}^{11}+2(n-1)\lambda\mu\ddot{f}^{12} & \nonumber \\
		+(n-1)\mu^2\ddot{f}^{22}+(n-1)(n-2)\mu^2 \ddot{f}^{23}& =\alpha(\alpha-1)F. \label{eqhom2f}
	\end{align}
When we consider the analogous expressions for $G$, some additional terms occur due to the lack of homogeneity. We can see this either by applying Euler's theorem to the components of $G$ separately, or by a direct computation. To simplify notation, we set $\hat G:= \sigma_0\mathcal{M}_H H^{2-l}$. Then the first derivatives of $G$ have the form
\begin{equation}\label{derprg}
\dot{g}^1=2\left(\lambda-\frac 1n H \right) -\frac{2-l}{H} \hat G, \qquad
	\dot{g}^2=2\left(\mu -\frac 1n H \right)-\frac{2-l}{H} \hat G.
	\end{equation}
We observe that, since $\lambda \leq \mu$ and $\hat G>0$, we have $\dot{g}^1 < 0$ everywhere, so that in particular the condition $\dot{g}^1 \neq 0$ holds at our stationary point, as claimed.	
	
The second derivatives of $g$ are
\begin{align}
\ddot{g}^{ii}&=2-\frac 2n-\frac{(2-l)(1-l)}{H^2} \hat G, \qquad 1 \leq i \leq n\nonumber \\
\ddot{g}^{ij}&=-\frac 2n -\frac{(2-l)(1-l)}{H^2} \hat G, \qquad 1 \leq i \neq j \leq n. \label{dersecg}
	\end{align}
Therefore we have
	\begin{align}
		\lambda \dot{g}^1+(n-1)\mu \dot{g}^2 &=2 G+l \hat{G} \label{eqhom1}\\
\lambda^2 \ddot{g}^{11}+2(n-1)\lambda\mu\ddot{g}^{12} & \nonumber \\
+(n-1)\mu^2\ddot{g}^{22}+(n-1)(n-2)\mu^2 \ddot{g}^{23}& =2G-(l^2-3l)\hat{G}.\label{eqhom2}
\end{align}

From \eqref{eqhom1} we deduce that, at a point where $G=0$,
\begin{equation*}
	(n-1)\frac{\dot{g}^2}{\dot{g}^1}=-\frac{\lambda}{\mu}-\frac{l\hat G}{\mu\dot{g}^1}
\end{equation*}
%not homogeneous in this case. It is, though, the difference of the two homogeneous functions, $G=G_1-G_2$, with $G_1=|h|^2-\frac 1n H^2$ and $G_2=\sigma \sigma_0 \mathcal{M}_H H^{2-l}$ and thus we can follow the proof of Lemma 3.6 and use the identities:
%	\begin{align}
%		\lambda \dot{g}^1&+(n-1)\mu \dot{g}^2=2 G+l G_2\label{eqhom1}\\
%		\lambda^2 \ddot{g}^{11}&+2(n-1)\lambda\mu\ddot{g}^{12}+(n-1)\mu^2\ddot{g}^{22}+(n-1)(n-2)\mu^2 \ddot{g}^{23}=2G-(l^2-3l)G_2\label{eqhom2}
%	\end{align}
%At a critical point of $G$, $\nabla G=0$, thus, for \eqref{eqhom1}:
%\begin{equation*}
%	\frac{\dot{g}^2}{\dot{g}^1}=-\frac{\lambda}{(n-1)\mu}+\frac{2g-l\dot{g}^2}{(n-1)\mu\dot{g}^1}
%\end{equation*}
Using this, we can rewrite the term in brackets in \eqref{evofg1} as follows:
\begin{align*}
&\left\{(\dot{g}^1\ddot{f}^{11}-\dot{f}^1\ddot{g}^{11})\frac{\lambda^2}{\mu^2}+2(n-1)(\dot{g}^1\ddot{f}^{12}-\dot{f}^1\ddot{g}^{12})\frac{\lambda}{\mu}\right.\\
&+(n-1)(\dot{g}^1\ddot{f}^{22}-\dot{f}^1\ddot{g}^{22})+(n-1)(n-2)(\dot{g}^1\ddot{f}^{23}-\dot{f}^1\ddot{g}^{23})\\
&-\left.\frac{2\dot{f}^1\dot{g}^1\lambda}{(\lambda-\mu)\mu} - \frac{2(n-1)\dot{f}^2\dot{g}^1}{\lambda-\mu}\right.\\
&+\left.(\dot{g}^1\ddot{f}^{11}-\dot{f}^1\ddot{g}^{11})\left[\left(\frac{\hat{l G}}{\mu\dot{g}^1}\right)^2+\frac{2\lambda}{\mu}\frac{l\hat{G}}{\mu\dot{g}^1}\right]\right.\\
&\left.+2(n-1)(\dot{g}^1\ddot{f}^{12}-\dot{f}^1\ddot{g}^{12})\frac{l \hat{G}}{\mu \dot{g}^1} -2\frac{\dot{f}^1\dot{g}^1}{(\lambda-\mu)\mu}l \hat{G}\right\}.
\end{align*}
After rearranging the terms with the second derivatives and using \eqref{eqhom1f}, \eqref{eqhom2f}, \eqref{eqhom2} we conclude that, at a zero maximum point for $G$,
	\begin{align}
		(&\dot{G}^{ij}\ddot{F}^{lm,rs}-\dot{F}^{ij}\ddot{G}^{ml,rs})\nabla_i h_{ml}\nabla_j h_{rs}=\nonumber\\
		&\left\{ \dot{g}^{1}\frac{\alpha(\alpha-1)F}{\mu^2}+\dot{f}^{1} \frac{(l^2-3l)\hat{G}}{\mu^2}-2 \dot{g}^{1}\frac{\alpha F}{\mu(\lambda - \mu)}+2\dot{f}^1 \frac{l\hat{G}}{\mu(\lambda - \mu)}\right. \nonumber\\
		& \,\, +  \left(\frac{l^2\hat G^2}{\mu^2(\dot{g}^{1})^2}-2 \frac{l\hat{G}\lambda}{\mu^2\dot{g}^{1}}\right)(\dot{g}^{1}\ddot{f}^{11}-\dot{f}^{1}\ddot{g}^{11}) \nonumber\\
		& \,\, \left. -2(n-1)\frac{l\hat{G}}{\mu \dot{g}^{1}}(\dot{g}^{1}\ddot{f}^{12}-\dot{f}^{1}\ddot{g}^{12})\right\}(\nabla_1 h_{22})^2. \label{gradterms}
	\end{align}
%	The derivatives of $Z_l$ involved are:
%	$$\dot{g}^1=\frac{\partial Z_l}{\partial \lambda}=2(\lambda-\frac 1n H) -\sigma_0\mathcal{M}_H^l(2-l)H^{1-l}$$
%	$$\dot{g}^2=2(\mu -\frac 1n H)-\sigma_0\mathcal{M}_H^l(2-l)H^{1-l}$$
%	$$ \ddot{g}^{11}=2-\frac 2n-\sigma_0\mathcal{M}_H^l(2-l)(1-l)H^{-l}$$
%	$$\ddot{g}^{12}=-\frac 2n -\sigma_0\mathcal{M}_H^l(2-l)(1-l)H^{-l}.$$ 
	As observed above, $\dot{g}^1$ is negative. On the other hand, $\dot{f}^1>0$ by assumption. Thus, all terms in the first row inside the brackets are negative.
	
Before considering the remaining terms, let us observe that the derivative $\ddot{g}^{11}$ is positive; in fact,  $G=0$ implies
$$\hat G=|h|^2-\frac 1n H^2<\sigma_0 H^2 < \frac{1}{n(n-1)}H^2$$
and thus  $\hat G H^{-2}< \frac{1}{n(n-1)}$, which implies $\ddot{g}^{11}>0$ by \eqref{dersecg}. Then, we can estimate the terms which involve the second derivatives of $g$ as follows:
	\begin{align*}
	-\left(\frac{l^2\hat{G}^2}{\mu^2(\dot{g}^{1})^2} \, -\right. & \left. 2\frac{l\hat{G}\lambda}{\mu^2\dot{g}^{1}}\right)\dot{f}^1\ddot{g}^{11}+2(n-1)\frac{l\hat{G}}{\mu \dot{g}^{1}}\dot{f}^1\ddot{g}^{12}\\
	< &\, 	 2\frac{l\hat{G}\lambda}{\mu^2\dot{g}^{1}}\dot{f}^1\ddot{g}^{11}+2(n-1)\frac{l\hat{G}}{\mu \dot{g}^{1}}\dot{f}^1\ddot{g}^{12}\\
	= & \, \frac{2l\hat{G}}{\mu^2\dot{g}^1}\dot{f}^1\left(\ddot{g}^{11}\lambda + (n-1)\ddot{g}^{12}\mu\right)\\
	= & \, \frac{2l\hat{G}}{\mu^2\dot{g}^1}\dot{f}^1\left(2(\lambda-\frac 1n H)-(2-l)(1-l)H^{-1}\hat G \right)\\
	= & \, \frac{2l\hat{G}}{\mu^2\dot{g}^1}\dot{f}^1\left(\dot{g}^1+l(2-l)H^{-1}\hat G \right)
	< \frac{2l\hat{G}}{\mu^2}\dot{f}^1.
	\end{align*}
%where we have used that $Z_l$ is the sum of two homogeneous terms and thus the same is true for $\dot{g}^1$.
This term is positive, but it is compensated by the good negative term $\dot{f}^{1} \frac{(l^2-3l)\hat{G}}{\mu^2}$ for $l<1$.

We can rearrange the remaining terms by using Euler's identity for $\dot{f}^1$ to obtain
\begin{align*}
&\left(\frac{l^2\hat{G}^2}{\mu^2(\dot{g}^{1})^2}-2\frac{l\hat{G}\lambda}{\mu^2\dot{g}^{1}}\right)\dot{g}^1\ddot{f}^{11}-2(n-1)\frac{l\hat{G}}{\mu\dot{g}^1}\dot{g}^1\ddot{f}^{12}\\
=&\frac{l^2\hat{G}^2}{\mu^2\dot{g}^1}\ddot{f}^{11}-2\frac{l\hat{G}}{\mu^2}\left(\lambda\ddot{f}^{11}+(n-1)\ddot{f}^{12}\mu\right)\\
=&\frac{l^2\hat{G}^2}{\mu^2\dot{g}^1}\ddot{f}^{11}-2\frac{l\hat{G}}{\mu^2}(\alpha-1)\dot{f}^1\\
= &\frac{l\hat{G}}{\mu^2 \dot{g}^1}\left(l\hat{G}\ddot{f}^{11}-2(\alpha-1)\dot{f}^1\dot{g}^1\right).
\end{align*}
Since $\dot{g}^1<0$, we need to show that the quantity in parentheses is positive.
We plug in the explicit expression for $\dot{g}^1$ to obtain
\begin{align*}
	&l\hat{G}\ddot{f}^{11}-2(\alpha-1)\dot{f}^1\dot{g}^1\\
	= \,&\, l\hat{G}\ddot{f}^{11}-2(\alpha-1)\dot{f}^1\left(2(\lambda-\frac 1n H) -(2-l)H^{-1}\hat G \right)\\
	> \, & \, \hat{G}(l \ddot{f}^{11} +2(\alpha-1)\dot{f}^1(2-l)H^{-1}).
\end{align*}
Since both $\ddot{f}^{11}$ and $\dot{f}^1 H^{-1}$ are comparable to $H^{\alpha-2}$, the sum above is positive with the choice of a suitable small $l$. Therefore the right-hand side in \eqref{gradterms} is nonpositive at a zero maximum for $Z_l$, and this concludes our proof.
\end{proof}

%	
%	In particular, all of them are bounded for $l \to 0$ and they're comparable to sums of powers of the mean curvature. We observe that the derivatives of the speed do not depend on $l$, and they are also comparable to suitable powers of $H$ for the same reasoning as in \eqref{stimaseconda}; all the summands can then be rewritten as contants times $H^{\alpha-l-1}$.  it suffices to choose a small enough value of $l$ to ensure that the gradient terms are negative due to the good terms  $\dot{g}^{\lambda}\frac{\alpha(\alpha-1)F}{\mu^2}-2 \dot{g}^{\lambda}\frac{\alpha F}{\mu(\lambda - \mu)}$.\\

Once the previous result is established, we can follow the procedure of \cite{AMC} to conclude our proof, since it applies to our setting without changes needed. We briefly recall the steps of the argument for the reader's convenience. The previous Lemma implies that the principal curvatures become comparable at points where the curvature is large. More precisely, one can easily deduce the following property: for any $\epsilon > 0$ there exists a constant $C(\epsilon)$ such that $\mu \leq (1+\epsilon)\lambda + C(\epsilon)$. By a geometric estimate on convex sets (Theorem 3.1 in \cite{AMC}) one deduces that the ratio between inner and outer radius of $M_t$ approaches one as the singular time is approached. This property allows to employ a maximum principle argument to show that the speed goes to infinity with a uniform rate on all $M_t$ and that the profile of $M_t$ becomes spherical up to rescaling, see Sections 11-12 in \cite{AMC}.

\section{Construction of the ancient ovaloids}

In this section we prove the main theorem of this paper:
	\begin{thm}\label{main}
	Let the speed $f$ satisfy assumptions {\rm (F1)-(F4)} in Section 2. Then there exists a compact, convex, axially symmetric ancient solution $M_t$ of the flow \eqref{Fflow}, defined on $I=(-\infty, 0)$, such that: 
		\begin{itemize}
			\item $M_t$ converges to a round point for $t\to 0$;
			\item $M_t$ is not the shrinking sphere;
			\item the family of rescaled flows $S^{-1}M_{S^{1+\alpha} t}$, with $S \to +\infty$, admits a subsequence converging to the standard shrinking cylinder as $t \to -\infty$.
		\end{itemize}
	\end{thm}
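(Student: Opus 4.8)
The plan is to realise $M_t$ as a subsequential limit of closed convex solutions living on longer and longer time intervals, in the spirit of \cite{W03,HH,LZ} but using Theorem \ref{roundpoint} in place of the round-point results available there. As approximating data I would take, for each $k\in\natN$, a \emph{capsule} $N_k$: a smooth convex rotationally symmetric hypersurface made of a cylinder of radius $1$ and length $2k$ with two half-spherical caps of radius $1$, smoothed near the junctions so as to be axially stretched — this is possible because $\lambda\equiv 0<\mu\equiv 1$ on the cylindrical part and $\lambda\equiv\mu\equiv 1$ on the caps, so one can interpolate keeping $0\le\lambda\le\mu$. Running \eqref{Fflow} from $N_k$, Theorem \ref{roundpoint} guarantees that $M^k_t$ stays convex, rotationally symmetric and axially stretched and contracts to a round point at a finite time $T_k$, and a comparison argument shows $T_k\to\infty$. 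Using the scale covariance of \eqref{Fflow} under $(x,t)\mapsto(cx,c^{1+\alpha}t)$, I would then set
\[\widehat M^k_t:=\lambda_k^{-1}\bigl(M^k_{T_k+\lambda_k^{1+\alpha}t}-c_ke_1\bigr),\qquad t\in[-\tau_k,0),\quad \tau_k=T_k\lambda_k^{-(1+\alpha)},\]
with $c_k$ placing the extinction point at the origin and $\lambda_k>0$ fixed by requiring that the inradius of $\widehat M^k_{-1}$ equal $1$; the existence of such a bounded $\lambda_k$ follows from inradius monotonicity and the asymptotic roundness of $M^k_t$ near extinction, and it gives $\tau_k\to\infty$. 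Each $\widehat M^k_t$ is a convex, rotationally symmetric, axially stretched solution of \eqref{Fflow}, extinct at $t=0$, defined on intervals exhausting $(-\infty,0)$.

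The core of the argument is a set of uniform estimates on a slab $[-L,-\varepsilon_0]$, with $\varepsilon_0$ a fixed number slightly less than $1$ and $k$ large. Writing $w_k(t)=\max_M u$, at a point realising the maximum one has $u_x=0$, hence $\mu=1/w_k$ and, since $0\le\lambda\le\mu$, $(n-1)\mu\le H\le n\mu$, while $\tfrac{d}{dt}w_k=-F$ there; combining these with the bound $m_1H^\alpha\le F\le m_2H^\alpha$ (with $m_1,m_2$ depending only on $f,n$, as in the proof of Lemma \ref{poles}) gives
\[-(1+\alpha)m_2n^\alpha\ \le\ \frac{d}{dt}\bigl(w_k^{1+\alpha}\bigr)\ \le\ -(1+\alpha)m_1(n-1)^\alpha .\]
Integrating from $t=-1$, using that an axially stretched convex body has radial extent comparable to its inradius (so $w_k(-1)\asymp 1$ uniformly), and using an avoidance comparison with the shrinking ball inscribed in $\widehat M^k_{-1}$ (whose extinction time exceeds $-\varepsilon_0$ since that of $\widehat M^k$ is $0$), one obtains $0<c(L)\le w_k(t)\le C(L)$ on $[-L,-\varepsilon_0]$. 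Hence $\mu$ is two-sidedly bounded, and since $0\le\lambda\le\mu$ so is the whole second fundamental form; the derivative estimates for rotationally symmetric flows (cf.\ Section 4 of \cite{McMMV}) together with the direct curvature estimates of \cite{AMCZ} then bound all higher derivatives uniformly on $[-L,-\varepsilon_0]$. By Arzelà–Ascoli and a diagonal argument over $L$ and $k$, a subsequence converges in $C^\infty_{\mathrm{loc}}$ on $(-\infty,-\varepsilon_0]$ to a convex, rotationally symmetric, axially stretched ancient solution $M_t$, which one extends to $(-\infty,0)$ by flowing $M_{-\varepsilon_0}$ forward; a comparison argument shows $M_t$ becomes extinct exactly at $t=0$, and Theorem \ref{roundpoint} applied to $M_{-\varepsilon_0}$ gives the convergence to a round point, which is the first bullet.

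For the second bullet, a shrinking sphere with inradius $1$ at $t=-1$ is the unit sphere at that time, whose extinction time is $-1+\tfrac{1}{(1+\alpha)n^\alpha}<0$ because $(1+\alpha)n^\alpha\ge 4$; since $M_t$ is extinct at $0$, it cannot be the shrinking sphere. For the third bullet, the displayed inequalities give $w(t)^{1+\alpha}\asymp|t|$ for $M_t$ as $t\to-\infty$, so $M_t$ spreads out; moreover $M_t$ is very elongated for large negative $t$ (its length grows at least as its radial extent, by the axially stretched condition, and in fact the eccentricity diverges, inherited from the $N_k$), so near the axis its profile becomes translation invariant, and since $-u\,u_{xx}\le 1$ at the neck forces $\lambda\to 0$ there, the neck radius satisfies $\tfrac{d}{dt}w^{1+\alpha}\to-(1+\alpha)f(0,1,\dots,1)$ as $t\to-\infty$. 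Applying the uniform estimates to the parabolically rescaled flows $S^{-1}M_{S^{1+\alpha}t}$ and letting $S\to\infty$ along a subsequence, the cap regions escape to spatial infinity and the limit is the self-similar shrinking cylinder of radius $\bigl((1+\alpha)f(0,1,\dots,1)|t|\bigr)^{1/(1+\alpha)}$, i.e.\ the standard shrinking cylinder.

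I expect the main obstacle to be the uniform non-degeneracy of the normalised approximants: the two-sided radii bounds away from the extinction time, the boundedness of $\lambda_k$, and, for the last bullet, the quantitative statement that the central neck genuinely behaves like a shrinking cylinder (rather than collapsing or rounding up) for large negative times, together with the divergence of the eccentricity. All of these rest on a careful combination of the comparison principle with round spheres and cylinders, the pinching $\lambda\le\mu$ and its consequences from Section 3, and the two-sided bound $m_1H^\alpha\le F\le m_2H^\alpha$; this is exactly where the special structure of the capsule data is indispensable, since for general convex rotationally symmetric data neither convergence to a round point nor invariance of axial stretching is available.
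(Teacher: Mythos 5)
Your overall strategy --- capsule approximants, round-point convergence via Theorem~\ref{roundpoint}, parabolic rescaling, uniform second-order bounds, and a compactness argument --- is the same as the paper's, and your ODE estimate on $w_k(t)=\max u$ is a valid alternative way to get two-sided bounds on the \emph{radial} extent. However, your normalization is different and leaves a genuine gap: you pin the inradius of $\widehat M^k_{-1}$ to $1$, whereas the paper rescales so that the eccentricity ratio $a^l(-1)/b^l(-1)$ equals $2$ exactly and is $\geq 2$ for $t\leq -1$. Nothing in your argument controls the \emph{axial} extent $a^k(t)$: the ODE on $w_k$ only sees the maximum of $u$, and the curvature bounds you derive do not force $a^k$ to stay bounded, since $\lambda$ is only $\geq 0$ and a weakly convex axially stretched body with bounded radial curvature can have arbitrarily large diameter. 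Without a uniform bound on $a^k/b^k$ on compact time intervals, the $C^\infty_{\mathrm{loc}}$ limit could be a noncompact hypersurface (a cylinder), not a closed ovaloid. The paper supplies this bound through the Haslhofer--Hershkovits observation \eqref{ratioHH} that the ratio cannot be halved faster than a fixed rate, which, combined with the pinning at $t=-1$, propagates an upper bound on $a^l/b^l$ backward in time; your normalization provides no analogue, and indeed $a^k(-1)/b^k(-1)$ could a priori tend to $1$ or to $\infty$ along the subsequence, depending on how the scale $\lambda_k$ behaves.

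Two secondary points. First, your ``not-the-sphere'' argument needs the limit to be extinct exactly at $t=0$, but the convergence you establish is on $(-\infty,-\varepsilon_0]$, and the extinction time of the forward continuation from $M_{-\varepsilon_0}$ is not obviously $0$ without a two-sided comparison at times near $0$ --- which again requires the missing outradius control. The paper's normalization makes $a(-1)/b(-1)=2\neq 1$ in the limit, so non-sphericity is automatic. Second, your lower bound on the speed via $w_k$ only controls $\mu$ where $|u_x|$ is moderate; near the poles $\mu$ can be large but the lower bound on $F$ requires a separate argument, which the paper supplies with the support-function estimate of Theorem~14 in \cite{AMCZ}. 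Your more direct treatment of the third bullet (neck analysis rather than Cheeger--Gromoll splitting) is a plausible variant, but it too depends on the uniform $a^k$ bound you have not established.
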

	\begin{proof}
As in \cite{W03,HH,LZ}, we define a sequence of approximants, which solve the flow on a bounded time interval and have increasing eccentricity, and which will subconverge to an ancient solution with the desired properties.
More specifically, for any $l \in \mathbb{N}$, we consider a starting hypersurface $M_0^l$ obtained by smoothly capping the cylinder $[-l,l]\times\mathbb{S}^{n-1}$ with spherical caps of radius one, as described in detail in \cite{HH} or \cite{LZ}. Each $M_0^l$ is convex, rotationally symmetric and satisfies $\lambda \leq \mu$ everywhere, thus by Theorem \ref{roundpoint} the corresponding solution $M_t^l$ converges in finite time to a round point.

We denote the axial length and the spherical radius of $M_t^l$ respectively as
$$a^l(t)=\max\limits_{x\in M_t^l}|x_1|, \qquad b^l(t)=\max\limits_{x\in M_t^l} \left( \sum_{i=2}^{n+1} x_i^2\right)^{\frac 12}.$$
The convergence to a round point implies that $a^l(t)/b^l(t) \to 1$ as the singular time is approached.
%Due to convexity and spherical symmetry, bounds on these quantities are equivalent to bounds on the outer and the inner radius, respectively. The results of the the previous section apply to the evolution of each $M_0^l$ under \eqref{Fflow}, thus we have shown that each of these submanifolds converges in finite time to a round point.
%For every $l\in \mathbb{N}$, $M_0^l$ encloses the unit $n$-dimensional sphere and is enclosed by the unit $n$-cylinder; since an avoidance principle holds for the speeds we are considering, we know that the extincion times for all the flows in the sequence are comparable to one.
Since the speed is homogeneous, the flow admits parabolic rescalings of the form $(x,t) \to (\Lambda x,\Lambda^{1+\alpha}t)$ with $\Lambda>0$. For each $l$, we can choose a suitable scaling factor and add a time translation in order to have that:
\begin{enumerate}
	\item $M^l_t$ is defined on an interval of the form $[-T^l, 0)$; \label{formint}
	\item $\frac{a^l(t)}{b^l(t)}\geq 2$ on $[-T^l,-1]$ and $\frac{a^l(-1)}{b^l(-1)}=2$ for every $l \in \mathbb{N}$.\label{fixratio}
\end{enumerate}
From this point, we will work with the rescaled flows and we will keep the same notation since no confusion should occur. 
As a first step, we prove uniform bounds on the geometry of these flows.

%We shall prove bounds on the immersions up to the second order, which are uniform along the sequence (i.e. $l$-independent) on compact subsets of times of $(-\infty, 0)$; in what follows, $K$ will denote an arbitrary such subset. Classic Krylov-Safonov theory will then provide uniform H\"older estimates of every order; we will then be able to extract a subsequence which converges to an ancient flow.
%Bounds on the geometry of the evolving submanifolds follow almost directly from the comparison principle.
\begin{Lem}
For every compact interval of times $K \subset (-\infty, -1]$, there exist constants $b_K,A_K>0$ such that
	$b_K\leq 2 b^l(t) \leq a^l(t) \leq A_K$ for every $l \in \mathbb{N}$ such that $K \subset [-T^l,-1]$.  In addition, $-T^l \to -\infty$ as $l \to +\infty$.
\end{Lem}
%\begin{Cor}[Corollary]
%	$-T^l \to -\infty$ as $l \to +\infty$.
%\end{Cor}
\begin{proof}
We observe that, since the speed of the flow points inwards, $a^l(t)$ and $b^l(t)$ are monotone decreasing. In addition, due to convexity and spherical symmetry, these quantities can be compared with the inner and outer radius: it is easily seen that $M^l_t$ encloses the sphere centred at the origin of radius $b^l(t)/2$ and is enclosed by the sphere of radius $2a^l(t)$ (the constants are not optimal, but are enough for our purposes). If we denote by $R_0(t)$ the radius of a sphere evolving by \eqref{Fflow} shrinking at time $t=0$, then we deduce from the avoidance principle that $M^l_t$ intersects the sphere of radius $R_0(t)$ at all times. This  implies
\begin{equation}\label{spheres}
b^l(t)/2 \leq R_0(t) \leq 2a^l(t), \quad\mbox{ for every }t \in [-T_l,0).
\end{equation}

To bound from above the ratio $a^l(t)/b^l(t)$, 
%Using property \ref{fixratio} above, we obtain the following bounds at time $t=-1$:
%\begin{equation}\label{bounds}
%\frac 12 R_0(-1) \leq a_l(-1)=2b_l(-1) \leq 4 R_0(-1), \quad  \mbox{ for every }l.
%\end{equation}To deduce analogous bounds at earlier times,
we use a trick from \cite{HH}. Let us fix any $t_0 < -1$. Using $a^l(t_0) \geq 2 b^l(t_0)$, we see that there is a sphere enclosed by $M^l_{t_0}$, with centre at distance $a^l(t_0)/2$ from the origin and radius $b^l(t_0)/4$. In addition, by the monotonicity of $b^l$ and by \eqref{spheres}, we can estimate
$$
\frac 14 b^l(t_0) \geq \frac 14 b^l(-1) = \frac 18 a^l(-1) \geq \frac{1}{16} R_0(-1).
$$
Thus, by the avoidance principle, if we call $T^*$ the time taken by a sphere of radius $\frac{1}{16} R_0(-1)$ to shrink to a point under the flow, we have that $a^l(t) \geq a^l(t_0)/2$ for $t \in [t_0,t_0+T^*]$. This implies, again by the monotonicity of $b^l$,
\begin{equation}\label{ratioHH}
\frac{a^l(t)}{b^l(t)} \geq \frac{a^l(t_0)}{2 b^l(t)} \geq  \frac{a^l(t_0)}{2b^l(t_0)}, \qquad t \in [t_0,t_0+T^*],
\end{equation}
that is, the ratio $\frac{a^l(t)}{b^l(t)}$ cannot be halved faster than a fixed $l$-independent time. Since $\frac{a^l(-1)}{b^l(-1)}=2$ for all $l$, it follows that the ratio is bounded from above on the compact interval $K$. If we combine this property with the one-sided bounds in \eqref{spheres}, we conclude that $a^l(t),b^l(t)$ are both comparable to $R_0(t)$ for $t \in K$, and therefore satisfy the claimed estimate for suitable constants $b_K,A_K$. 

Finally, since by construction $\frac{a^l(-T_l)}{b^l(-T_l)} = l \to +\infty$, the above argument shows that $-T_l$ cannot be contained in any compact set $K$, and thus diverges to $-\infty$.
\end{proof}

In the next step, we obtain a two-sided bound on the speed which will ensure the compactness of the sequence ${M^l_t}$. Here the proof requires new arguments compared with \cite{HH,LZ}, due to the different properties of the flows considered.

\begin{Lem}
For any $T>1$ there exist two constants $c_T, C_T>0$ and a positive integer $l_T$ such that 
$$c_T \leq \min\limits_{M^l_t} F \leq \max\limits_{M^l_t} F \leq C_T$$
for all $t \in [-T,-1]$ and all $l \geq l_T$.
\end{Lem}
\begin{proof}
	To obtain the bound from above we use Theorem 12 in \cite{AMCZ}, which is based on the well known trick by Tso \cite{T}. To apply this theorem, we need to check the hypothesis that along our flow
	$$\sum_{i=1}^n \frac{\partial f}{\partial \lambda_i}\lambda_i^2\geq C f^2,$$
for a positive constant $C$. It suffices to show that the inequality holds on the cone $\Gamma_0$ which includes the possible values of the curvatures of our hypersurfaces; in addition, by homogeneity, it is enough to consider the points of $\Gamma_0$ which lie on the unit sphere. Since $\Gamma_0$ intersected with the unit sphere is a compact subset of the cone $\Gamma$ where $f$ and its first derivatives are positive, we deduce that $\left( \sum_{i=1}^n \frac{\partial f}{\partial \lambda_i}\lambda_i^2 \right)f^{-2}$ has a positive minimum on this set, and such a minimum is the required constant $C$.	
	
%	 from Theorem 12 in \cite{AMCZ}, although we need to show that $\dot{F}^{ij}h_{im}h^{mj}\geq C F^2$ holds along the flow for a positive constant $C>$. We observe that the inequality is equivalent to $\sum_{i=1}^n \frac{\partial f}{\partial \kappa_i}\kappa_i^2\geq C f^2$, thus $\frac{\sum_{i=1}^n \frac{\partial f}{\partial \kappa_i}\kappa_i^2}{f^2} \geq C$. Since the quotient is zero-homogeneous, it is enough to show the inequality on the unit sphere; this follows trivially by the observation that $\frac{\sum_{i=1}^n \frac{\partial f}{\partial \kappa_i}\kappa_i^2}{f^2}$ has a minimum for $f$ positive and this minimum is the required constant $C$.
	By the previous lemma, there exist $b_T,A_T>0$ such that $b_T \leq 2 b^l(t)\leq a^l(t)\leq A_T$ for all $l$ such that 
	 $-T_l<-2T$ and $t \in [-2T,-1]$. Then Theorem 12 in \cite{AMCZ} gives the estimate
	$$
		F\leq C'\frac{A_T}{b_T}\left(b_T^{-\alpha}+(t+2T)^{-\frac{\alpha}{1+\alpha}}\right), \qquad t \in (-2T,-1].
	$$
	on $M^l_t$ for every $t \in (-2T,-1]$, where the constant $C'$ only depends on $C,\alpha$. Since $(t+2T)^{-\frac{\alpha}{1+\alpha}}$ is bounded for $t \in [-T,-1]$, the estimate from above on $F$ follows.

To prove the estimate from below, we analyse separately the points which are far from the poles and the ones which are near. We  first consider points on $M^l_t$ where the normal direction forms an angle at least $\frac \pi 4$ with the axis of rotation, and therefore we have $|u_x| \leq 1$ in the parametrisation of Section 2. Since by definition  $u \leq b^l(t)$ everywhere, we see from \eqref{curvatures} that $\mu \geq \frac{1}{\sqrt{2}b^l(t)}  \geq \frac{\sqrt 2}{A_T}$ at such a point. By the monotonicity and homogeneity of $f$, we deduce the bound
$$
F=f(\lambda,\mu,\dots,\mu) \geq f \left(0,\frac {\sqrt 2}{A_T},\dots,\frac {\sqrt 2}{A_T} \right)>0,
$$ 
which gives an $l$-independent bound from below. To estimate the points where the angle is less than $\frac \pi 4$, we use the following general result for flows of convex hypersurfaces, see Theorem 14 in \cite{AMCZ}: if we consider for $z \in S^n$ the support function
$$
s(z,t) = \max_{p \in M^n} \langle \phi(p,t),z \rangle
$$
then we have
$$
	F(p_2,t_2)\geq \frac{s(z,t_1)-s(z,t_2)}{(1+\alpha)(t_2-t_1)}
	$$
	for all $z \in S^n$ and $t_2 > t_1$, where $p_2$ is such that $\nu(p_2,t_2)=z$.

We take now any $t_2 \in [-T,-1]$ and any point $p_2$ where the normal $\nu(p_2,t_2)$ has an angle less than $\frac \pi 4$ with the axis of rotation, and we set $z=\nu(p_2,t_2)$. We have that $M^l_{t_2}$ is enclosed in a sphere of radius $2a^l(t_2)$ and therefore 
$s(z,t_2) \leq 2a^l(t_2) \leq 2A_T$. On the other hand, by our assumption on $z$ and by \eqref{spheres}, we have
$$s(z,t_1) \geq \frac{1}{\sqrt 2} a^l(t_1) \geq \frac{R_0(t_1)}{2 \sqrt 2}.$$
We now choose $t_1 <-T$ large enough to have $R_0(t_1) > 8 \sqrt 2 A_T$. For any $l$ such that $-T_l \leq t_1$, we deduce
$$
	F(p_2,t_2)\geq \frac{2 A_T}{(1+\alpha)(t_2-t_1)} > \frac{2 A_T}{(1+\alpha)|t_1|} ,
	$$
which yields a lower bound in this case too, and thus completes the proof of our lemma.
%
%	It is enough to estimate $F$ from below on a neighborhood of the poles, where the hypersurfaces are uniformly convex; away from the poles, the radial curvature $\mu$ is close to $\frac 1 u$ and thus uniformly bounded away from zero by the previous Lemma.
%
%	Following Theorem 14 in \cite{AMCZ}, we know that 
%	$$
%	F(z,t_2)\geq \frac{s(z,t_1)-s(z,t_2)}{(1+\alpha)(t_2-t_1)}
%	$$
%	for $t_2 \geq t_1$.
%	where $z\in\mathbb{S}^n$ and $s$ is the support function centered at the origin.
%	If we choose $z$ equal to the direction of the axis of rotation, $s(z,t)=a(t)$. We know that the hypersurfaces at the initial time are enclosed in a cylinder and enclose a sphere collapsing at $t=0$, thus $b(t)$ is comparable to $|t|^{\frac{1}{\alpha+1}}$. By \eqref{fixratio}, $a(t_1) \geq 2 b(t_1)$, as $t_1 \leq -1$, while $a(t_2) \leq M b(t_2)$ (with $M$ independent of $t_1$) since the ratio $\frac{a(t)}{b(t)}$ is uniformly bounded on $K$.
%	Thus we can choose $t_1$, possibly outside $K$, so that $2|t_1|^{\frac{1}{\alpha+1}}-M|t_2|^{\frac{1}{\alpha+1}} \geq C$ with a constant $C$ which does not depend on $l$.
%	Such an estimate also holds in a small neighborhood of the poles: if $z$ is a direction which forms an angle less or equal than $\frac \pi 4$ with the axis of rotation, $s(z, t_1) \geq \sqrt{2}b(t_1)$ for geometric considerations and $s(z,t_2)\leq a(t_2)$, thus we can repeat the same argument.
\end{proof}

\noindent{\em Proof of theorem \ref{main} (conclusion)} 
From the bounds on the speed given in the previous lemma, we immediately obtain bounds on the curvature. In fact, by monotonicity and by the property $\lambda \leq \mu$, we have
$$
f(0,1,\dots,1) \mu^\alpha \leq F \leq  f(1,\dots,1) \mu^\alpha
$$
and therefore the speed and the radial curvature control each other. The axial curvature $\lambda$ on the other hand is bounded above by $\mu$ and below by zero. It follows that the curvatures of all approximating flows are contained in a compact subset of the cone $\Gamma_0$, thus we deduce curvature derivative bounds of any order by the method recalled at the beginning of the proof of Theorem \ref{roundpoint}. In particular, we have compactness of the immersions by Ascoli-Arzel\`a theorem and we can find a subsequence which converges to a compact convex solution $\varphi_{\infty}$ of the flow \ref{Fflow}. The radial curvature $\mu$ is positive everywhere on the limit because of the uniform bounds on the approximants; the axial curvature $\lambda$ is also positive by Lemma \ref{prespinch}.
Since $T^l \to -\infty$, we see that $\varphi_{\infty}$ is ancient; since $\frac{a^l(-1)}{b^l(-1)}=2$, this ratio is $2$ also in the limit, thus $\varphi_{\infty}$ cannot be a shrinking sphere. We can continue the solution for $t \geq -1$ until it shrinks to a round point in finite time, according to Theorem \ref{roundpoint};  by possibly adding a time translation we can make the singular time to be zero, so that our ancient solution is defined in $(-\infty,0)$.

Finally, the statement about the asymptotic description follows by an argument analogous to the one in \cite{LZ}, which we briefly recall for the  convenience of the reader. For $S>0$, we define the parabolic rescalings $\varphi^S_\infty(\cdot, t)=S^{-1}\varphi_\infty(\cdot, S^{1+\alpha}t)$. For a fixed time $t<0$, we have $S^{1+\alpha}t \to -\infty$, and from this it follows that
the ratio $\frac{a_S(t)}{b_S(t)} \to +\infty$ as $S \to \infty$. On the other hand, using comparison with shrinking cylinders and spheres, one proves that ${b_S(t)}$ remains bounded. The sequence satisfies again $S$-uniform bounds up to second order on compact sets as in the previous lemma, and thus it is precompact and admits a subsequential limit as $S \to +\infty$. Using the property that $a_S(t) \to +\infty$, one can show that the limit contains a line, which is a limit for $S \to \infty$ of geodesics connecting the two poles. By applying Cheeger and Gromoll splitting theorem and using convexity and rotational symmetry, we conclude that the limit is a standard cylinder.
\end{proof}
\medskip

\noindent {\bf Acknowledgements}
The first author has been supported by the grant PRIN2017 CUP E68D19000570006 of MIUR (Italian Ministry of Education and Research) and is a member of the group GNSAGA of INdAM (Istituto Nazionale di Alta Matematica). The second author has been supported by MIUR Excellence Department Project awarded to the Department of Mathematics, University of Rome ``Tor Vergata'', CUP E83C18000100006, and by the grant ``Beyond Borders'' CUP E84I19002220005 of the University of Rome ``Tor Vergata'', and is a member of the group GNAMPA of INdAM. 

% (Istituto Nazionale di Alta Matematica)

\bibliographystyle{abbrv}
  \bibliography{biblio}

\begin{thebibliography}{10}

\bibitem{AW}
S.~J. Altschuler and L.~F. Wu.
\newblock Translating surfaces of the non-parametric mean curvature flow with
  prescribed contact angle.
\newblock {\em Calc. Var. Partial Differential Equations}, 2(1):101--111, 1994.

\bibitem{A1}
B.~Andrews.
\newblock Contraction of convex hypersurfaces in {E}uclidean space.
\newblock {\em Calc. Var. Partial Differential Equations}, 2(2):151--171, 1994.

\bibitem{A5}
B.~Andrews.
\newblock Moving surfaces by non-concave curvature functions.
\newblock {\em Calc. Var. Partial Differential Equations}, 39(3):649--657,
  2010.

\bibitem{AGN}
B.~Andrews, P.~Guan, and L.~Ni.
\newblock Flow by powers of the {G}auss curvature.
\newblock {\em Adv. Math.}, 299:174--201, 2016.

\bibitem{AMC}
B.~Andrews and J.~McCoy.
\newblock Convex hypersurfaces with pinched principal curvatures and flow of
  convex hypersurfaces by high powers of curvature.
\newblock {\em Trans. Amer. Math Soc.}, 364(7):3427--3447, 2012.

\bibitem{AMCZ}
B.~Andrews, J.~McCoy, and Y.~Zheng.
\newblock Contracting convex hypersurfaces by curvature.
\newblock {\em Calc. Var. Partial Differential Equations}, 47(3-4):611--665,
  2013.

\bibitem{An1}
S.~Angenent.
\newblock Shrinking doughnuts.
\newblock {\em Progr. Nonlinear Differential Equations Appl.}, 7:21--38, 1992.

\bibitem{B+}
T.~Bourni, J.~Clutterbuck, X.~H. Nguyen, A.~Stancu, G.~Wei, and V.-M. Wheeler.
\newblock Ancient solutions for flow by powers of the curvature in
  $\mathbb{R}^2$.
\newblock {\em Calc. Var. Partial Differential Equations}, 61(2):1--14, 2022.

\bibitem{BLT4}
T.~Bourni, M.~Langford, and G.~Tinaglia.
\newblock Ancient mean curvature flows out of polytopes.
\newblock {\em To appear in Geom. Topol., preprint arXiv:2006.16338}, 2020.

\bibitem{BLT3}
T.~Bourni, M.~Langford, and G.~Tinaglia.
\newblock Convex ancient solutions of mean curvature flow in slab regions.
\newblock {\em in Differential Geometry in the Large, London Mathematical
  Society Lecture Note Series.}, 463, 2020.

\bibitem{BLT}
T.~Bourni, M.~Langford, and G.~Tinaglia.
\newblock Collapsing ancient solutions of mean curvature flow.
\newblock {\em J. Differential Geom.}, 119(2):187--219, 2021.

\bibitem{BrendleDaskaChoi}
S.~Brendle, K.~Choi, and P.~Daskalopoulos.
\newblock Asymptotic behavior of flows by powers of the {G}aussian curvature.
\newblock {\em Acta Math.}, 219(1):1--16, 2017.

\bibitem{BIS}
P.~Bryan, M.~Ivaki, and J.~Scheuer.
\newblock On the classification of ancient solutions to curvature flows on the
  sphere.
\newblock {\em preprint arXiv:1604.01694}, 2016.

\bibitem{Ger}
C.~Gerhardt.
\newblock {\em Curvature problems, Series in Geometry and Topology, vol. 39}.
\newblock International Press, Somerville, MA, 2006.

\bibitem{H2}
R.~Hamilton.
\newblock The formation of singularities in the {R}icci flow.
\newblock {\em Surveys in {D}ifferential {G}eometry}, 2:7--136, 1993.

\bibitem{HH}
R.~Haslhofer and O.~Hershkovits.
\newblock Ancient solutions of the {M}ean {C}urvature {F}low.
\newblock {\em Comm. Anal. Geom.}, 24(3):593--604, 2016.

\bibitem{HIMW}
D.~Hoffman, T.~Ilmanen, F.~Martin, and B.~White.
\newblock Graphical translators for mean curvature flow.
\newblock {\em Calc. Var. Partial Differential Equations}, 58(117), 2019.

\bibitem{Hu}
G.~Huisken.
\newblock Flow by mean curvature of convex surfaces into spheres.
\newblock {\em J. Differential Geom.}, 20:237--266, 1984.

\bibitem{HS}
G.~Huisken and C.~Sinestrari.
\newblock Convex ancient solutions of the {M}ean {C}urvature {F}low.
\newblock {\em J. Differential Geom.}, 101(2):267--287, 2015.

\bibitem{L1}
M.~Langford.
\newblock A general pinching principle for mean curvature flow and
  applications.
\newblock {\em Calc. Var. Partial Differential Equations}, 56(4), 2017.

\bibitem{LL}
M.~Langford and S.~Lynch.
\newblock Sharp one-sided curvature estimates for fully nonlinear curvature
  flows and applications to ancient solutions.
\newblock {\em J. Reine Angew. Math.}, 765:1--33, 2020.

\bibitem{LWW}
H.~Li, X.~Wang, and J.~Wu.
\newblock Contracting axially symmetric hypersurfaces by powers of the
  $\sigma_k$-curvature.
\newblock {\em J. Geom. Anal.}, 31(3):2656--2702, 2021.

\bibitem{LZ}
P.~Lu and J.~Zhou.
\newblock Ancient solutions for {A}ndrews' hypersurface flow.
\newblock {\em J. Reine Angew. Math.}, 2021(771):85--98, 2021.

\bibitem{McMMV}
J.~A. McCoy.
\newblock More mixed volume preserving curvature flows.
\newblock {\em J. Geom. Anal.}, 27(4):3140--3165, 2017.

\bibitem{MMM}
J.~A. McCoy, F.~Y. Mofarreh, and V.-M. Wheeler.
\newblock Fully nonlinear curvature flow of axially symmetric hypersurfaces.
\newblock {\em Nonlinear Differ. Equ. Appl.}, 22(2):325--343, 2015.

\bibitem{P}
G.~Perelman.
\newblock The entropy formula for the {R}icci flow and its geometric
  applications.
\newblock {\em preprint arXiv:math/0211159}, 2002.

\bibitem{R}
S.~Risa.
\newblock Ancient solutions of curvature flows ({P}h{D} thesis).
\newblock {\em Universit\`a di Roma ``Tor Vergata''}, 2018.

\bibitem{RS}
S.~Risa and C.~Sinestrari.
\newblock Ancient solutions of geometric flows with curvature pinching.
\newblock {\em J. Geom. Anal.}, 29(2):1206--1232, 2019.

\bibitem{RS2}
S.~Risa and C.~Sinestrari.
\newblock Strong spherical rigidity of ancient solutions of expansive curvature
  flows.
\newblock {\em Bull. London Math. Soc.}, 52(1):94--99, 2020.

\bibitem{T}
K.~Tso.
\newblock Deforming a hypersurface by its {G}auss-{K}ronecker curvature.
\newblock {\em Comm. Pure Appl. Math.}, 38(6):867--882, 1985.

\bibitem{WA}
X.-J. Wang.
\newblock Convex solutions to the mean curvature flow.
\newblock {\em Ann. of Math.}, 173:1185--1239, 2011.

\bibitem{W03}
B.~White.
\newblock The nature of singularities in mean curvature flow of mean-convex
  sets.
\newblock {\em J. Amer. Math. Soc.}, 16(1):123--138, 2003.

\end{thebibliography}
\end{document}